\documentclass[11pt]{article}
\usepackage{amsmath}
\usepackage{amsmath,amssymb,amsthm,graphicx,mathrsfs,color}
\usepackage[numbers,sort&compress]{natbib}
\numberwithin{equation}{section}
\newcommand{\beq}{\begin{equation}}
	\newcommand{\enq}{\end{equation}}

\newtheorem{Theorem}{Theorem}[section]
\newtheorem{Lemma}[Theorem]{Lemma}
\newtheorem{Corollary}[Theorem]{Corollary}
\newtheorem{Definition}[Theorem]{Definition}

\newtheorem{Remark}[Theorem]{Remark}

\begin{document}
	\pagenumbering{arabic} \setcounter{page}{1}
	\renewcommand{\baselinestretch}{1}
	\renewcommand{\thefootnote}{\fnsymbol{footnote}}
	\allowdisplaybreaks[4]
	\linespread{1.2}
	
	\begin{center}
		{\large \bf On the attractor of the semi-dissipative Boussinesq equations revised}
		\vspace{0.5cm}\\
		{Kexin Li ${}^\dag$ and Chunyou Sun${}^{\dag,\S, *}$}\\\vspace{0.3cm}
		
		{\small ${}^\dag$ School of Mathematics and Statistics, Donghua University, Shanghai, 201620, P.R. China\\
			${}^\S$ School of Mathematics and Statistics, Lanzhou University, Lanzhou, 730000, P.R. China}\\
	\end{center}
	
	\begin{abstract}
		In this paper, we continue investigate the structure of the weak $\sigma$-attractor $\mathcal{A}$ for the 2D Boussinesq equations with viscosity and without heat diffusion. Specifically, we show that the projection of the attractor $\mathcal{A}$ onto the temperature component is  surjective, i.e., $P_{\theta}\mathcal{A}=L^2$; and, for any $r>0$, the projection of $\mathcal{A}_r$ onto the velocity component contains a infinite-dimensional Hilbert ellipsoid, which implies $P_u\mathcal{A}_r$ is infinite dimensional immediately. 
		
		\textbf{Keywords:} Semi-dissipative system; Boussinesq equations; Weak $\sigma$-attractor.
	\end{abstract}

	\vspace{-1 cm}
	
	\footnote[0]{\hspace*{-7.4mm}
		Mathematics Subject Classification:\, 37L05, 35B40, 35B41, 35Q35, 35Q86, 76D09. \\
		$^{*}$ Corresponding author.\\
		E-mail address: lkx@mail.dhu.edu.cn (K.Li), sunchy@lzu.edu.cn (C.Sun).}

	\section{Introduction}
	
	The 2D Boussinesq equations model geophysical flows such as atmospheric fronts and oceanic circulation, and play an important role in the study of Rayleigh-Bénard convection (see \cite{29, 31}). 
	
	This paper focus on the semi-dissipative case, characterized by the presence of viscosity and the absence of heat diffusion, which is known to be globally well-posed (see, e.g., \cite{dong2006, Rapha2008, adam2013, xu2025}). Leveraging this global well-posedness, we aim to investigate the long-time dynamics of the Boussinesq system, governed by the following equations:
	\begin{equation}\label{1.1}
		\begin{cases}
			\partial_t u + (u \cdot \nabla)u + \nabla p - \nu \Delta u = \theta e_2 & \text{in } \Omega \times \mathbb{R}_+, \\
			\partial_t \theta + u \cdot \nabla \theta = 0 & \text{in } \Omega \times \mathbb{R}_+, \\
			\nabla\cdot u = 0 & \text{in } \Omega \times \mathbb{R}_+, \\
			(u(x, 0),\mathbf{\theta}(x, 0)) = (u_0(x), \theta_0(x)) & \text{in } \Omega,
		\end{cases}
	\end{equation}
	where $\Omega = \mathbb{T}^2 := \mathbb{R}^2/(L\mathbb{Z}^2) \equiv [0,L]^2$ is a periodic domain in $\mathbb{R}^2$, $e_2 = (0,1)^T$, the velocity $u = (u^1, u^2)$, the pressure $p$ and the temperature $\theta$ are the unknowns. Notably, the system is dissipative in the velocity variable but non-dissipative in the temperature. In this regime, the classical theory of global attractors (such as \cite{bab1992, carvalho2013, hale1988, ladyzhenskaya1991, Temam1997} et al.) breaks down due to the lack of dissipativity in the temperature equation. 
	
	In \cite{Biswas2017}, A.Biswas, C.Foias, and A.Larios first introduced the innovative concept of the weak $\sigma$-attractor and applied for this system.
	\begin{Lemma}[\cite{Biswas2017}]\label{xiyinzidingyi}
		Let $\{S(t)\}_{t\geqslant 0}$ be the semigroup operator associated with \eqref{1.1} in $\mathcal{H}\times L^2$. Defining the weak $\sigma$-attractor \( \mathcal{A} \) of the semi-dissipative system \eqref{1.1} to be the set of all \( (u_0, \theta_0) \in \mathcal{H} \times L^2 \) with the property that:
		\begin{enumerate}
			\item[(i)] There exists a global trajectory \( ({u}(t), \theta(t)) \) defined for all \( t \in \mathbb{R} \) such that \( ({u}(t), \theta(t)) \) belongs to \(  \mathcal{H} \times L^2 \) and solves (\ref{1.1}) for all \( t \in \mathbb{R} \), and moreover \( ({u}(0), \theta(0)) = ({u}_0, \theta_0) \);
			\item[(ii)] The trajectory \( ({u}(t), \theta(t)) \) is globally bounded, i.e., the set \( \{ ({u}(t), \theta(t)) : t \in \mathbb{R} \} \) is bounded in \(  \mathcal{H} \times L^2 \).
		\end{enumerate}

		Then, \eqref{1.1} has a unique weak $\sigma$-attractor $\mathcal{A}$.
	\end{Lemma}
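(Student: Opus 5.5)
Since $\mathcal{A}$ is defined as a set (all initial data lying on globally bounded complete trajectories), uniqueness is automatic. The real content of Lemma \ref{xiyinzidingyi} is the properties that justify calling $\mathcal{A}$ an attractor: it is nonempty, invariant, and weakly attracts bounded sets in the sense of Biswas--Foias--Larios. My plan is to establish these in that order.

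\emph{Invariance.} For $(u_0,\theta_0)\in\mathcal{A}$, shifting the global trajectory in time gives $S(t)\mathcal{A}\subset\mathcal{A}$. Conversely, any point of $\mathcal{A}$ equals $S(t)$ applied to the point reached at time $-t$ on its trajectory, so $S(t)\mathcal{A}=\mathcal{A}$. This uses only the global well-posedness and uniqueness of \eqref{1.1} in $\mathcal{H}\times L^2$.

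\emph{A priori estimates.} I will use two basic estimates.
\begin{enumerate}
\item[(a)] Transport of $\theta$ by a divergence-free field preserves every $L^p$ norm, so $\|\theta(t)\|_{L^2}=\|\theta_0\|_{L^2}$.
\item[(b)] Testing the momentum equation with $u$ gives $\frac12\frac{d}{dt}|u|^2+\nu\|\nabla u\|^2\le|\theta||u|$. Since the mean of $\theta$ is conserved and the projected forcing is mean-free in $\mathcal{H}$, Poincar\'e applies and gives
\[
|u(t)|\le e^{-\nu\lambda_1 t}|u_0|+C|\theta_0|.
\]
\end{enumerate}
Hence the sets $B_r=\{|u|\le C r,\ |\theta|\le r\}$ absorb data with $|\theta_0|\le r$, and so $\mathcal{A}_r:=\mathcal{A}\cap\{|\theta|\le r\}$ is contained in $B_r$. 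Moreover $\mathcal{A}=\bigcup_r\mathcal{A}_r$; this is the $\sigma$-structure.

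\emph{Nonemptiness and weak attraction.} For the attraction property I will use a Cantor diagonal argument. Take a bounded set $B$, points $(u_0^n,\theta_0^n)\in B$, and times $t_n\to\infty$. Consider the solutions started at time $-t_n$, giving trajectories $(u^n,\theta^n)(s)$ defined for $s\ge -t_n$, uniformly bounded in $\mathcal{H}\times L^2$ for $s\ge -t_n+1$. The additional bounds I need are:
\begin{itemize}
\item parabolic smoothing, which gives $u^n$ bounded in $L^2_{loc}(V)$;
\item $\partial_t u^n$ bounded in $L^{4/3}_{loc}(V')$ (or better);
\item $\partial_t\theta^n=-\nabla\cdot(u^n\theta^n)$ bounded in some $L^p_{loc}(H^{-s})$.
\end{itemize}
By Aubin--Lions, $u^n\to u$ strongly in $L^2_{loc}(\mathcal{H})$ and weak-$*$ in $L^\infty_{loc}(\mathcal{H})$. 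Also $\theta^n\to\theta$ weak-$*$ in $L^\infty(L^2)$ and in $C_{loc}(\mathbb{R};L^2_w)$. A diagonal extraction over $s\in[-k,k]$ then produces a limit defined on all of $\mathbb{R}$.

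\emph{Main obstacle.} The hardest step is passing to the limit in the nonlinear term $u^n\theta^n$ when $\theta^n$ converges only weakly. Strong convergence of $u^n$ in $L^2_{loc}(L^2)$ combined with weak convergence of $\theta^n$ in $L^2$ gives $u^n\theta^n\rightharpoonup u\theta$ in the sense of distributions, which suffices. I then still have to identify the limit as \emph{the} solution of \eqref{1.1}. This needs uniqueness of weak solutions with $u\in L^2(V)$ and $\theta\in L^\infty(L^2)$, which I would take from the well-posedness theory cited, or else I would work in the class where that theory applies.

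Once this is done, the limit is a bounded complete trajectory, so its value at time $0$ lies in $\mathcal{A}_{r}$. It is the weak limit of $S(t_n)(u_0^n,\theta_0^n)$, which proves weak attraction and nonemptiness. Lower semicontinuity of norms shows that $\mathcal{A}_r$ is bounded and weakly closed, hence weakly compact.
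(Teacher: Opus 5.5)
Your opening observation is the whole proof of the statement as formulated. $\mathcal{A}$ is \emph{defined} as a particular subset of $\mathcal{H}\times L^2$, so its existence and uniqueness are tautological, and it is nonempty since $(0,0)$ is a fixed point. The paper gives no argument of its own and simply cites \cite{Biswas2017}.

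The rest of your proposal proves properties the paper attributes to Lemma~\ref{yuanxitongxiyinzir}, and there your route differs from that of \cite{Biswas2017}. That paper first proves that each $S(t)$ is weakly continuous on bounded sets (the lemma recalled in Section~\ref{2}). It then applies abstract $\omega$-limit theory to the weakly compact, positively invariant absorbing sets $\mathcal{B}_r$. This gives $\mathcal{A}_r=\omega^{wk}(\mathcal{B}_r)$ as a weakly compact invariant set made of points on bounded complete trajectories, together with $\mathcal{A}=\bigcup_r\mathcal{A}_r$ and weak attraction.

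You instead extract a bounded complete trajectory directly from the shifted solutions $S(\cdot+t_n)(u_0^n,\theta_0^n)$, using Aubin--Lions and a diagonal argument over $[-k,k]$. This is essentially the compactness argument the paper uses in the proof of Theorem~\ref{bijinyinli}, where the backward trajectory is assembled piece by piece rather than by diagonal extraction. Your version yields attraction and the complete trajectory through each limit point in one stroke. The abstract route yields weak compactness, invariance and the identification with $\omega^{wk}(\mathcal{B}_r)$ automatically. However, its key input, weak continuity of $S(t)$, requires exactly the limit passage you carry out.

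Two places should be tightened.

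\textbf{Identifying the limit.} The step you defer is closed as in Theorem~\ref{bijinyinli}. The limit satisfies $\partial_t u\in L^2_{loc}(V')$, so $u\in C(\mathcal{H})$ by Lions--Magenes. Also, $\theta\in L^\infty(L^2)$ is a weak solution of a transport equation driven by a divergence-free field in $L^2_{loc}(V)$, so by \cite{DiPernaLions1989} it is renormalized and lies in $C(L^2)$. This places the limit in the uniqueness class of Theorem~\ref{thm2.4}, so it coincides with $S(t)$ applied to its value at any earlier time.

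\textbf{Weak closedness of $\mathcal{A}_r$.} This does not follow from lower semicontinuity of norms alone; that only gives the bounds on the limit. To see that a weak limit of points of $\mathcal{A}_r$ still lies on a bounded complete trajectory, you must rerun the trajectory-compactness argument on the trajectories through those points.
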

	
	Obviously, the introduction of weak $\sigma$-attractor solve some shortcomings of classical attractor theory, allows people can apply the attractor theory to describe the long-time behavior of semi-dissipative systems.
	
	Moreover,  they showed in \cite{Biswas2017} that the weak $\sigma$-attractor is
	not only non-trivial, but also an extremely rich proper subset of the phase space.
	In particular, the weak $\sigma$-attractor is a countable union of weakly compact
	invariant sets and attracts all bounded sets in the sense of the weak topology of  $\mathcal{H}\times L^2$, which, gave us also an applicable method to construct the weak $\sigma$-attractor.
	\begin{Lemma}[\cite{Biswas2017}]\label{yuanxitongxiyinzir}
		For every $r\geq0$, the local attractor at level $r$, denoted by $\mathcal{A}_r$, is defined as the $\omega$-limit set of the positively invariant set 
		\[
		\mathcal{B}_r := \bigl\{ ({u}_0,\theta_0) \in \mathcal{H} \times L^2 : \|\theta_0\|_{2} \leq r,\ \|{u}_0\|_{2} \leq R(r)  = \frac{2 r}{\nu \lambda_1}\bigr\}.
		\]
		That is,
		\begin{equation}\label{ar}
			\mathcal{A}_r := \omega^{wk}(\mathcal{B}_r) := \bigcap_{\tau\ge 0} \overline{\vphantom{\bigcap} \bigl\{ S(t)({u}_0,\theta_0) : t > \tau,\ ({u}_0,\theta_0)\in \mathcal{B}_r \bigr\}}^{wk},
		\end{equation}
		where the closure is taken in the weak topology of $\mathcal{H} \times L^2$.
		
		Then, 
		\begin{enumerate}
			\item[-] The relation $\displaystyle \mathcal{A} = \bigcup_{r\ge 0} \mathcal{A}_r = \bigcup_{\substack{r\ge 0,\,r\in\mathbb{Q}}} \mathcal{A}_r$ holds, where $\mathcal{A}_r$ is defined by (\ref{ar}).
			
			\item [-]$\mathcal{A}$ has empty interior in the strong (and therefore weak) topology of $\mathcal{H} \times L^2$.
			
			\item[-] $\mathcal{A}$ is a non-empty, proper subset of the phase space $\mathcal{H} \times L^2$ which moreover contains infinite-dimensional subspaces of the phase space.
			
			\item[-]The set $\mathcal{A}$ attracts all bounded sets in the weak topology of $\mathcal{H} \times L^2$.
		\end{enumerate}
		
	\end{Lemma}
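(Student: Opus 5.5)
The plan is to derive every item from two a priori facts about \eqref{1.1}, plus a weak continuity property of $S(t)$.

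\emph{Step 1: a priori bounds.} Multiplying the temperature equation by $\theta$ and using $\nabla\cdot u=0$ gives $\|\theta(t)\|_2=\|\theta_0\|_2$ for all $t$. Taking the inner product of the momentum equation with $u$ and using Poincaré's inequality $\lambda_1\|u\|_2^2\le\|\nabla u\|_2^2$ gives
$$\frac{d}{dt}\|u\|_2+\nu\lambda_1\|u\|_2\le\|\theta_0\|_2 .$$
Hence
$$\|u(t)\|_2\le e^{-\nu\lambda_1 t}\|u_0\|_2+\frac{\|\theta_0\|_2}{\nu\lambda_1}.$$
Two consequences follow. First, $\mathcal{B}_r$ is positively invariant. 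Second, every bounded set with $\|\theta_0\|_2\le r$ enters $\mathcal{B}_r$ after a finite time $t_0$ that depends only on the bound. Integrating the energy inequality in time, and using the standard 2D Navier--Stokes enstrophy estimate with forcing $\theta e_2\in L^\infty(L^2)$, gives a bound on $\|\nabla u(t)\|_2$ for $t\ge1$ that depends only on the initial bound.

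\emph{Step 2: weak continuity.} On bounded sets of $\mathcal{H}\times L^2$, $S(t)$ is continuous from the weak topology to the weak topology. I expect this to be the main obstacle. I would take a sequence of initial data converging weakly. The velocities are bounded in $L^2(0,T;H^1)$, and their time derivatives are bounded in a negative Sobolev space, so Aubin--Lions gives strong $L^2_{t,x}$ compactness of $u$. The nonlinear terms $u\cdot\nabla u$ and $u\theta$ (the equation is in divergence form, $\nabla\cdot(u\theta)$) then pass to the limit, because each is a product of a strongly convergent factor with a weakly convergent one. One point is delicate: the limit equation needs uniqueness of weak solutions in order to identify the limit and upgrade subsequence convergence to convergence of the whole sequence. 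For this I would rely on the well-posedness results cited in the introduction.

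With Steps 1 and 2 in hand, $\mathcal{A}_r=\omega^{wk}(\mathcal{B}_r)$ is handled by the standard $\omega$-limit argument in the weak topology. This topology is metrizable and compact on $\mathcal{B}_r$. The argument shows that $\mathcal{A}_r$ is nonempty, weakly compact and invariant. Invariance lets me extend each point backward inside $\mathcal{A}_r$ by a diagonal compactness argument, which produces a complete trajectory bounded by $R(r)$ and $r$. Therefore $\mathcal{A}_r\subset\mathcal{A}$.

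For the converse inclusion, let $(u_0,\theta_0)\in\mathcal{A}$. Its bounded complete trajectory has $\|\theta\|_2\equiv r:=\|\theta_0\|_2$ and $\|u\|_2\le M$. By Step 1, the state at time $-T+t_0$ lies in $\mathcal{B}_r$ for every $T$. Hence $(u_0,\theta_0)\in S(T-t_0)\mathcal{B}_r$ for arbitrarily large $T$, which gives $(u_0,\theta_0)\in\mathcal{A}_r$. Since $\mathcal{B}_r\subset\mathcal{B}_{r'}$ for $r\le r'$, the sets $\mathcal{A}_r$ are monotone in $r$, and the union may be taken over rational $r$.

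\emph{Step 3: the remaining properties.} For empty interior, invariance together with the smoothing bound of Step 1 shows that $P_u\mathcal{A}_r$ is a bounded subset of $H^1$. Consequently $P_u\mathcal{A}\subset H^1$. Every open ball of $\mathcal{H}\times L^2$ contains a point whose velocity is not in $H^1$, so $\mathcal{A}$ has empty strong interior, and a fortiori empty weak interior. This also shows that $\mathcal{A}$ is a proper subset.

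For nonemptiness and the infinite-dimensional subspaces, I would use stationary solutions. Take $u=0$ and $\theta=\theta(x_2)$, an arbitrary $L^2$ function of $x_2$ alone. Then $\theta e_2=\nabla p$ with $p=\int^{x_2}\theta$, which is periodic after subtracting the mean of $\theta$ and absorbing it into the pressure appropriately. These functions form an infinite-dimensional subspace of fixed points, and fixed points lie in $\mathcal{A}$.

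For attraction, let $B$ be bounded with $\|\theta_0\|_2\le r$. After time $t_0$ we have $S(t)B\subset\mathcal{B}_r$. Suppose some weak neighborhood of $\mathcal{A}_r$ failed to attract $B$. Then there would be $t_n\to\infty$ and $b_n\in B$ with $S(t_n)b_n$ outside that neighborhood. A weak limit of a subsequence would lie in $\omega^{wk}(\mathcal{B}_r)=\mathcal{A}_r$, which is a contradiction.
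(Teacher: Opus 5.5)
Your proposal is correct and reconstructs the standard argument behind this result. The paper itself gives no proof, only the reference \cite{Biswas2017}, and your ingredients are the ones it recalls elsewhere: the energy bound and positive invariance of $\mathcal{B}_r$, weak-to-weak continuity of $S(t)$, the weak $\omega$-limit construction, the $H^1$ bound on $P_u\mathcal{A}_r$ giving $\mathcal{A}\subset V\times L^2$ and hence empty interior, and the hydrostatic steady states $(0,\theta(x_2))$ for the infinite-dimensional subspaces.

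There is one small slip: for $r=0$ there is no finite entry time $t_0$ into $\mathcal{B}_0$ when $u_0\neq 0$. In the converse inclusion you can drop $t_0$ altogether by letting $s\to-\infty$ in $\|u(t)\|_2\le e^{-\nu\lambda_1(t-s)}M+r/(\nu\lambda_1)$, which places the whole complete trajectory in $\mathcal{B}_r$ for every $r\ge 0$; for the attraction claim you can simply replace $r$ by any $r'>r$.
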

	
	Subsequently, in \cite{sun2019}, the authors conducted a further analysis of the structure of this weak $\sigma$-attractor, proving that it possesses a pancake-like structure and thereby enriching the understanding of its architecture.
	\begin{Lemma}[\cite{sun2019}]\label{HS}
		For every $r\geq0$,  set\[
		\widetilde{\mathcal{B}}_r := \{(u_0,\theta_0) \in \mathcal{H}\times L^2 : \|\theta_0\|_{2} = r,\, \|u_0\|_{2} \leq R(r)= \frac{2r}{\nu\lambda_1}\},
		\]
		and define
		\begin{equation}\label{H-st}
			{H}_r := \omega^{st}(\widetilde{\mathcal{B}}_r) := \bigcap_{\tau \ge 0} \overline{\bigl\{ S(t)(u_0,\theta_0) : t > \tau,\,(u_0,\theta_0) \in \widetilde{\mathcal{B}}_r \bigr\}}^{strong},
		\end{equation}
		where the closure is taken in the strong (norm) topology of $\mathcal{H} \times L^2$.
		
		Then,
		\[
		\mathcal{A}=\bigcup_{r\geq 0} H_r.
		\] 
	\end{Lemma}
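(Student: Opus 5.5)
The plan is to prove the two inclusions $\bigcup_{r\ge0}H_r\subset\mathcal{A}$ and $\mathcal{A}\subset\bigcup_{r\ge0}H_r$ separately. The first is soft and uses only Lemma \ref{yuanxitongxiyinzir}. The second uses the two basic a priori facts for \eqref{1.1}: conservation of $\|\theta\|_2$, and the energy inequality for $u$.

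\emph{First inclusion.} Fix $r\ge0$. The strong closure of a set is contained in its weak closure, and $\widetilde{\mathcal{B}}_r\subset\mathcal{B}_r$. Comparing \eqref{H-st} with \eqref{ar}, for every $\tau\ge0$ we get
\[
\overline{\{S(t)(u_0,\theta_0): t>\tau,\ (u_0,\theta_0)\in\widetilde{\mathcal{B}}_r\}}^{strong}\subset \overline{\{S(t)(u_0,\theta_0): t>\tau,\ (u_0,\theta_0)\in\mathcal{B}_r\}}^{wk}.
\]
Intersecting over $\tau$ gives $H_r\subset\mathcal{A}_r\subset\mathcal{A}$ by Lemma \ref{yuanxitongxiyinzir}.

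\emph{Second inclusion.} Take $(u_0,\theta_0)\in\mathcal{A}$, with a bounded global trajectory $(u(t),\theta(t))_{t\in\mathbb{R}}$ as in Lemma \ref{xiyinzidingyi}. Set $r:=\|\theta_0\|_2$.

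Since $\nabla\cdot u=0$ and $\theta$ is transported by the flow of $u$, we have $\|\theta(t)\|_2=r$ for all $t\in\mathbb{R}$, in both the forward and backward directions.

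Next, test the velocity equation with $u$ and use Poincar\'e's inequality $\|\nabla u\|_2^2\ge\lambda_1\|u\|_2^2$. This gives
\[
\tfrac12\tfrac{d}{dt}\|u\|_2^2+\nu\lambda_1\|u\|_2^2\le r\|u\|_2 ,
\]
so $\tfrac{d}{dt}\|u\|_2+\nu\lambda_1\|u\|_2\le r$. By Gronwall, for $s<t$,
\[
\|u(t)\|_2\le e^{-\nu\lambda_1(t-s)}\|u(s)\|_2+\frac{r}{\nu\lambda_1}.
\]
The trajectory is bounded, so letting $s\to-\infty$ yields $\|u(t)\|_2\le r/(\nu\lambda_1)\le R(r)$ for every $t\in\mathbb{R}$. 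Hence $(u(-t),\theta(-t))\in\widetilde{\mathcal{B}}_r$ for all $t\ge0$.

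By uniqueness of forward solutions (the semigroup $S(t)$ is well defined on $\mathcal{H}\times L^2$), we have $(u_0,\theta_0)=S(t)(u(-t),\theta(-t))$ for every $t>0$. Therefore, for every $\tau\ge0$, the point $(u_0,\theta_0)$ lies in $\{S(t)(v,\eta): t>\tau,\ (v,\eta)\in\widetilde{\mathcal{B}}_r\}$ itself, not merely in its closure. Intersecting over $\tau$ gives $(u_0,\theta_0)\in H_r$.

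\emph{Main obstacle.} The expected difficulty is rigor at the level of weak solutions in $\mathcal{H}\times L^2$, not the structure of the argument. One must justify three things: the exact conservation $\|\theta(t)\|_2=\|\theta_0\|_2$ for merely $L^2$ data, including along the backward part of the global trajectory; the energy inequality used above; and the identification of the global trajectory with $S(t)$ applied to its earlier states. I would first invoke the known global well-posedness for \eqref{1.1}. The velocity is regularized instantly ($u\in L^2_tH^1$, with a Lipschitz-type or Osgood-type flow, as in the well-posedness references), which makes $\theta$ a renormalized (DiPerna--Lions) solution, so its $L^p$ norms are preserved. The velocity equation is then linear in $\theta$ with an $L^2$ forcing of constant norm, so the standard 2D Navier--Stokes energy equality applies directly.
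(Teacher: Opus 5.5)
Your proof is correct. The paper does not prove Lemma \ref{HS} itself; it quotes it from \cite{sun2019}, so there is no in-paper argument to compare with, and yours is a complete, self-contained derivation.

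The inclusion $H_r\subset\mathcal{A}_r\subset\mathcal{A}$ uses only three facts: strong closures sit inside weak closures, $\widetilde{\mathcal{B}}_r\subset\mathcal{B}_r$, and Lemma \ref{yuanxitongxiyinzir}. The reverse inclusion rests on one observation. The backward bound $\|u(t)\|_2\le r/(\nu\lambda_1)=R(r)/2$ places the whole past of a bounded complete trajectory inside $\widetilde{\mathcal{B}}_r$. So a point of $\mathcal{A}$ lies on the forward orbit of $\widetilde{\mathcal{B}}_r$ at arbitrarily large times, not merely in its closure, and the choice of topology for the closure does not matter in that direction.

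Your argument also gives a sharper statement. Strong limits preserve $\|\theta\|_2=r$, so your two inclusions yield $H_r=\mathcal{A}\cap\{(u,\theta):\|\theta\|_2=r\}$. This identification is exactly what the introduction needs for $H_r\cap H_{r'}=\varnothing$ when $r\neq r'$. Combined with Theorem \ref{main-thm}, it also gives $P_\theta H_r=\{\theta\in L^2:\|\theta\|_2=r\}$.

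Your ``main obstacle'' paragraph is more cautious than necessary:
\begin{itemize}
\item Conservation of $\|\theta(t)\|_2$ and the energy decay estimate are already recorded in Section 2.2, quoted from \cite{Biswas2017}.
\item Backward conservation follows by applying forward conservation from any time $s<0$.
\item Identifying the global trajectory with $S(t-s)(u(s),\theta(s))$ is just the uniqueness in Theorem \ref{thm2.4}.
\end{itemize}

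The one small technical point is your division by $\|u\|_2$ in the Gronwall step, which fails where $u$ vanishes. Use instead $2r\|u\|_2\le \nu\lambda_1\|u\|_2^2+r^2/(\nu\lambda_1)$. This gives $\|u(t)\|_2^2\le e^{-\nu\lambda_1(t-s)}\|u(s)\|_2^2+r^2/(\nu\lambda_1)^2$, which is the paper's estimate with $\nu G=r/(\nu\lambda_1)$, and letting $s\to-\infty$ gives the same conclusion.

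If you do want DiPerna--Lions for the transport equation, $u\in L^2(0,T;V)$ is already enough, as the paper itself uses in the proof of Theorem \ref{bijinyinli}. No Lipschitz- or Osgood-type flow is needed.
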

	
	However, compared with the classical attractors, weak $\sigma$-attractor pose completely different problems and difficulties in the characterization of its analytical structure, dimension theory and perturbation properties, and many results about the classical attractors are no longer valid. Therefore, while the authors of \cite{Biswas2017} gave many beautiful structure of the weak $\sigma$-attractor, they also poses some fundamental questions regarding its precise characterization, such as:
	\begin{quote}
		\textit{“if one takes the projection of \( \mathcal{A} \) onto the temperature component (say \( P_\theta({u}, \theta) := \theta \)), the following questions remain open: (a) Does \( P_\theta \mathcal{A} \) have an empty interior in \( L^2 \)? (b) Is \( P_\theta \mathcal{A} \) a proper subset of \( L^2 \)? (c) Is \( P_\theta \mathcal{A} \) dense in \( L^2 \)?”}
	\end{quote}
	\begin{quote}
		\textit{“$\cdots \cdots$, although the whole attractor is infinite dimensional, is there any kind of finite dimensionality in its constituents pieces $\mathcal{A}_r$?”}
	\end{quote}

	In this paper, we will answer these questions, our main result is that:
	\begin{Theorem}\label{main-thm}
		$ P_{\theta}\mathcal{A} =L^2$.
	\end{Theorem}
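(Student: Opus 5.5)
The plan is to show that every $\theta_0\in L^2$ is the time-zero temperature of some globally bounded trajectory, as in Lemma \ref{xiyinzidingyi}. I will build such trajectories by a backward-in-time fixed-point construction on finite intervals $[-n,0]$ and then pass to the limit $n\to\infty$. The key structural fact is that the temperature is transported by a divergence-free field, so $\|\theta(t)\|_2$ is conserved. The velocity is then slaved to a forcing of fixed size $r:=\|\theta_0\|_2$, and the standard energy estimate (using $\|\nabla u\|_2^2\ge\lambda_1\|u\|_2^2$) keeps $\|u(t)\|_2\le R(r)=2r/(\nu\lambda_1)$ whenever $u$ starts from $0$.

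\emph{Step 1: reduction to smooth data.} Fix $r>0$ and suppose we know that every smooth $\theta_0$ with $\|\theta_0\|_2=r$ lies in $P_\theta\mathcal{A}_r$, with the corresponding velocity bounded by $R(r)$. For a general $\theta_0$ with $\|\theta_0\|_2=r$, take smooth $\theta_0^k\to\theta_0$ in $L^2$, normalized so that $\|\theta_0^k\|_2=r$. Pick points $(u_0^k,\theta_0^k)\in\mathcal{A}_r$ with $\|u_0^k\|_2\le R(r)$. By (\ref{ar}), $\mathcal{A}_r$ is an $\omega$-limit set in the weak topology, hence weakly closed, and it lies in the bounded set $\mathcal{B}_r$. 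A weakly convergent subsequence therefore gives $(u_0,\theta_0)\in\mathcal{A}_r$, since the temperature components converge strongly to $\theta_0$. Thus $P_\theta\mathcal{A}\supset L^2$. (The case $\theta_0=0$ is trivial: $(0,0)$ is a stationary point.)

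\emph{Step 2: finite-interval fixed point for smooth $\theta_0$.} Fix $n$. Work in a closed convex bounded set $K\subset L^2(-n,0;\mathcal{H})$ of velocity fields bounded by $R(r)$ in $L^\infty L^2\cap L^2H^1$. To make the flow map well defined, I regularize by mollifying $v$ in space at scale $\varepsilon$.
\begin{itemize}
\item[(a)] For $v\in K$, let $\Phi^v_{t\to0}$ be the flow of the mollified $v$ from time $t$ to time $0$, and set $\theta(t):=\theta_0\circ(\Phi^v_{t\to 0})$. This is the unique transport solution on $[-n,0]$ ending at $\theta_0$ at time $0$. It is measure preserving, so $\|\theta(t)\|_2=r$.
\item[(b)] Let $\tilde u$ solve the forced Navier--Stokes equation on $[-n,0]$ with $\tilde u(-n)=0$ and force $\theta e_2$. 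The energy inequality gives $\tilde u\in K$.
\item[(c)] Parabolic regularity and Aubin--Lions make the map $v\mapsto\tilde u$ compact and continuous, so Schauder's theorem yields a fixed point $u^{n,\varepsilon}$.
\end{itemize}
Letting $\varepsilon\to0$ with the same uniform bounds gives a solution $(u^n,\theta^n)$ of (\ref{1.1}) on $[-n,0]$ with $\theta^n(0)=\theta_0$ exactly, $\|\theta^n(t)\|_2=r$ and $\|u^n(t)\|_2\le R(r)$. The limit passage uses strong convergence of velocities and weak convergence of temperatures in the product $u\theta$.

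\emph{Step 3: $n\to\infty$.} Shift so that $(u^n(-n),\theta^n(-n))=(0,\theta^n(-n))\in\mathcal{B}_r$. Because $(u^n,\theta^n)$ is a genuine solution from $\mathcal{B}_r$ over time $n$, we get $(u^n(0),\theta_0)=S(n)(0,\theta^n(-n))$. Taking a weakly convergent subsequence of $u^n(0)$ and using definition (\ref{ar}) directly gives $(u_0,\theta_0)\in\mathcal{A}_r$; no global limiting trajectory is needed.

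\emph{Main obstacle.} I expect the delicate point to be the identification in Step 3: the constructed $(u^n,\theta^n)$ must coincide with the semigroup solution $S(t)$. This requires uniqueness in the class $\mathcal{H}\times L^2$, which the cited well-posedness theory provides at least for the regularity our solutions enjoy. Smooth $\theta_0$ and the parabolic smoothing of $u^n$ are what I would use to verify that the solutions lie in that class. The compactness and continuity needed for Schauder in Step 2, together with the $\varepsilon\to0$ passage, are the technical core.
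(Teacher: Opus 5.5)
Your proof is correct, and it takes a genuinely different route from the paper's.

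\textbf{The paper's route.} The paper never poses a two-point problem for \eqref{1.1} itself. It truncates the temperature equation to $X_N$ (system $(3.1)_N$). It then gets surjectivity of $x\mapsto P_\theta S_N(t)(0,x)$ onto the $r$-ball of $X_N$ from homotopy invariance of the Brouwer degree. The homotopy $\eta\mapsto P_\theta S_N(\eta t)(0,\cdot)$ to the identity is admissible because $\|\theta\|_2$ is conserved, so no $p$ with $\|p\|_2\le r$ is attained on $\partial B$. This yields $P_\theta\mathcal{A}_{Nr}=\{\theta\in X_N:\|\theta\|_2\le r\}$. To pass weak limits of points of $\mathcal{A}_{Nr}$ into $\mathcal{A}_r$, the paper then needs Theorem~\ref{bijinyinli}, whose proof builds a bounded complete trajectory of \eqref{1.1} by backward iteration.

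\textbf{Your route.} You replace the degree argument by the time-reversibility of transport. For a fixed (mollified) divergence-free velocity, the terminal-value problem for $\theta$ is solvable and norm preserving. So the mixed problem $u(-n)=0$, $\theta(0)=\theta_0$ becomes a Schauder fixed point. Because your solutions are orbits of $S(t)$ itself, \eqref{ar} finishes the proof directly, with no complete trajectory and no $N\to\infty$ limit.

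\textbf{The identification step.} This is the same step the paper uses in Theorem~\ref{bijinyinli}, and it is easier than you fear. DiPerna--Lions gives $\theta^n\in C([-n,0];L^2)$ with $\|\theta^n(t)\|_2=r$. Since $u^n\in L^4(L^4)$, the equation gives $\partial_t\theta^n=-\nabla\cdot(u^n\theta^n)\in L^4(H^{-3/2})$. So the solution lies in the uniqueness class of Theorem~\ref{thm2.4}. Smoothness of $\theta_0$ is neither propagated nor needed. Step 1 can therefore be dropped, since composition with measure-preserving flows is continuous on $L^2$. Also, the $L^2H^1$ bound defining $K$ should be allowed to depend on $n$.

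\textbf{What each route buys.} Yours is shorter and stays with the original semigroup. Run with $u(-n)=u_0$ fixed, and combined with the uniform $V$-bound on $u^n(0)$ from parabolic smoothing, it appears to give strong convergence of $S(n)(u_0,\theta^n(-n))=(u^n(0),\theta_0)$. That would settle the identity for $P_\theta\omega^{st}(\widetilde{\mathcal{B}}_r^{u_0})$ which the paper leaves open at the end of Section~\ref{timu5}. The paper's route uses only finite-dimensional topology for the surjectivity. It also produces the approximating attractors $\mathcal{A}_N$ and their weak upper semicontinuity toward $\mathcal{A}_r$ as by-products.
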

	
	Theorem \ref{main-thm} is a bit surprise: intuitively, \eqref{1.1} has some dissipativity and  $\mathcal{A}$ (or $\mathcal{A}_r$) is a kind of $\omega$-limit set, it should loss something. 
	
	With Theorem \ref{main-thm}, we can obtain further interesting  characterization about the structure of weak $\sigma$-attractor $\mathcal{A}$, for example,
	\begin{itemize}
		\item[-] Combining with the definition of the weak $\sigma$-attractor $\mathcal{A}$, Theorem \ref{main-thm} implies that: for any $\theta_0\in L^2$, there is a $u_0\in \mathcal{H}$ such that there exists a bounded complete trajectory $(u(t),\theta(t))$ of  \eqref{1.1} satisfying that $(u(0),\theta(0))=(u_0,\theta_0)$.
		
		\item[-] Combining with the  pancake-like structure $\mathcal{A} = \bigcup_{r\geq 0} H_r$ presented in Lemma \ref{HS} that given in \cite{sun2019}, we indeed can obtain that, for each $r\geq0$,
		\[
		P_{\theta}H_r=\{\theta\in L^2: \|\theta\|_2=r\}.
		\] 
		In fact, $P_{\theta}H_r\subseteq P_{\theta}\widetilde{\mathcal{B}}_r=\{\theta\in L^2: \|\theta\|_2=r\}$ is obviously; for the other side, we only need to note that, $H_r \cap H_{r'}=\varnothing$ whence $r\neq r'$ since the closure in \eqref{H-st} is the strong topology. 
		
		\item[-]  By Riesz lemma, it is easy to find $\{v_i\}_{i=1}^{\infty}\subset L^2$ satisfy $\|v_j\|_2=r$ and $\|v_i-v_j\|_2\geq \frac{r}{2}$ as $i\neq j$, then we know that  $\{u_i,v_i\}_{i=1}^{\infty}\subseteq H_r$ is not relatively compact for any $u_i\in \mathcal{H}$. This implies that it is impossible to get the asymptotical compactness of the solution semigroup
		in the strong topology of $\mathcal{H}\times L^2$ (note that, about the velocity variable $u$, the asymptotical compactness in the strong topology of $V$ was verified in \cite{sun2019}). In particular, the attraction properties about the temperature variable $\theta$ is impossible to be improved to the strong topology of $L^2$.
		
		\item[-] Obviously, from the above observation, it is impossible 
		for $\mathcal{A}_r$ to be finite fractal  dimensional in $\mathcal{H}\times L^2$ since $\mathcal{A}_r$ is non-compact.
		
		In fact, combining with the Horizontal solutions constructed in \cite{Biswas2017}, we will show that there is a infinite-dimensional Hilbert ellipsoid contained in $P_u\mathcal{A}_r$ (here $ P_u({u}, \theta) := u $ is the usual projection):
		\[
		\mathcal{E}_r^H
		:=\big\{
		\big(0,\frac{(-\partial_{x_1}^2)^{-1}\theta^H}{\nu}\big)\in\mathcal{H}:~\theta^H\in L^2_{0,H},~ \|\theta^H\|_{2}\leq r
		\big\}
		\subset P_u \mathcal{A}_r,
		\]
		where $L^2_{0,H}$ denotes the zero-mean $L^2$-functions on $\Omega$ that depend only on the horizontal variable $x_1$; which, implies that the Hausdorff and fractal dimensions of $P_u \mathcal{A}_r$ in $\mathcal{H}$, as well as the dimension of $P_u H_r$,  is infinite whence $r>0$. See the discussion in Section \ref{s5}.
	\end{itemize}

	Our argument is elementary and straightforward. In fact, we only begin with an analysis of Boussinesq system with projection, see \ref{touyingxitong} in Section \ref{timu4}, and then let $N\to\infty$ to approximate the original Boussinesq system \eqref{1.1}. By analyzing the solutions of \ref{touyingxitong}, we observe that \eqref{1.1} and \ref{touyingxitong} share identical dissipation properties, differing only in the space where the temperature variable lives. This analysis allows us to construct the weak $\sigma$-attractor $\mathcal{A}_N\subseteq \mathcal{H}\times X_N$ for \ref{touyingxitong}, and obviously $\mathcal{A}_N$ exhibits a similar rich structure as $\mathcal{A}$. Then we use the Homotopy invariance theorem of Brouwer degree (thanks to \cite{Carvalho2024, chepyzhov1992} that motivating us to apply such beautiful topology degree theory, in which they applied it to study the so-called unbounded attractors) to obtain $P_{\theta}\mathcal{A}_N=X_N$ for the approximate system \ref{touyingxitong}. In addition to the aforementioned interesting phenomena associated with $\mathcal{A}$, $\mathcal{A}_N$ exhibits also some interesting features, which are discussed Sections \ref{timu4} and \ref{timu5}.
	
The paper is organized as follows. In Section \ref{2}, we lay out notation and preliminary material. In Section \ref{timu4}, we adopt the Galerkin approximation to prove the existence and uniqueness of solutions for the projected system \ref{touyingxitong}. Especially, based on the proof process, we can identify the characteristics of this system as well as its connections with the original system. In Section \ref{timu5}, we successfully obtain $P_{\theta}\mathcal{A}_N=X_N$ by applying the Homotopy invariance theorem of Brouwer degree, and, we let $N$ tend to infinity to prove our main result, Theorem \ref{main-thm}. Finally, in Section \ref{s5}, we extend our analysis to discuss the dimension of $\mathcal{A}_r$ and $H_r$, and also present some discussions/observations to enrich the understanding of its architecture. 
	
\section{Preliminaries}\label{2}
	\subsection{Notation and function spaces}
	Let \( \mathcal{F} \) be the set of all vector-valued trigonometric polynomials with periodic domain \( \Omega=\mathbb{T}^2 \). We define the space of test functions which incorporates the divergence-free and zero-average condition to be
	\[
	\mathcal{V} := \big\{ \varphi \in \mathcal{F} : \nabla \cdot \varphi = 0 \text{ and } \int_{\mathbb{T}^2} \varphi(x)dx = 0 \big\}.
	\]
	For $ r \in [1, \infty] $ and positive integer $ m $, we use $ \mathbb{L}^r(\Omega)$  and $ \mathbb{H}^m(\Omega)$ to denote the usual vector-valued Lebesgue and Sobolev spaces over $ \Omega$, while $ \mathbb{H}^{-1}(\Omega)$ is the dual space of $ \mathbb{H}^1(\Omega)$. 
	We define \( \mathcal{H}\) and $V$ to be the closures of $ \mathcal{V} $ in $ \mathbb{L}^2(\Omega)$ and $\mathbb{H}^1(\Omega)$ respectively. 
	
	Observe from $\eqref{1.1}_2$ and $\eqref{1.1}_3$, if we assume that $\int_{\mathbb{T}^2} \theta_0(x) dx = 0$, then $\int_{\mathbb{T}^2} \theta(x,t) dx = 0$ for all $t \ge 0$, and also $\int_{\mathbb{T}^2} u(x,t) dx = 0$ for all $t \ge 0$ provided $\int_{\mathbb{T}^2} u_0(x) dx = 0$. Therefore, we can work in the spaces $\mathcal{H}\times \dot{L^2}$, where $\dot{L^2}$ is the usual scalar-valued Lebesgue space $L^2(\Omega)$  which incorporates the zero-average condition. And from here on, same as in the references \cite{adam2013}, to simplify notation, we write $\dot{L^2}$ as $L^2$, $\dot{H^1}(\Omega)$ as $H^1$ with periodic condition etc., and, use \( \|f\|_r \) to denote the \( L^r(\Omega) \) (or $ \mathbb{L}^r(\Omega)$) norm of \( f \). \( C \) denotes a generic constant which may change from line to line.
	
	We denote by $P_\sigma : \mathbb{L}^2(\Omega) \to \mathcal{H}$ the Leray-Helmholtz projection operator (i.e., the orthogonal projection onto divergence-free vector spaces), and the Stokes operator $A := -P_\sigma \Delta$ with domain $\mathcal{D}(A) := \mathbb{H}^2(\Omega) \cap V$. In our case of periodic boundary conditions, it is well known that $A = -\Delta$. We label the eigenvalues $\lambda_k$ of $A$ so that
	\begin{equation}
		0 < \lambda_1 \leq \lambda_2 \leq \cdots \leq \lambda_j \leq \cdots,\ \lambda_j \to +\infty,\ \text{as } j \to \infty,
	\end{equation}
	and let $\{\omega_j\}_{j=1}^\infty \subset \mathcal{D}(A)$ be the corresponding eigenfunctions.
	For \( 0 \leq r \leq s \leq 1 \), the Poincaré inequality asserts that:
	\[
	\lambda_1^{s - r} \|A^r u\|_2 \leq \|A^s u\|_2.
	\] 
	
	\begin{Definition}[\cite{Rapha2008,Biswas2017}]\label{ruojiedingyi}
		Suppose $(u_0, \theta_0) \in \mathcal{H} \times L^2$. A pair $(u, \theta)$ is said to solve (\ref{1.1}) if $(u(t), \theta(t)) \in \mathcal{H} \times L^2$ for each $t > 0$, $u$ and $\theta$ are both periodic and mean-free, and they satisfy

		\begin{equation}\label{jiedeshizi}
			\begin{aligned}
				&-\int_0^T \langle u(s), \Phi'(s) \rangle ds + \int_0^T \langle B(u(s), u(s)), \Phi(s) \rangle ds - \langle u_0, \Phi(\cdot, 0) \rangle \\
				&\qquad =-\nu\int_0^T \langle \nabla u(s) , \nabla \Phi(s) \rangle ds + \int_0^T \langle \theta(s) e_2, \Phi(s) \rangle ds,  \\
				&-\int_{0}^{T} \langle \theta(s), \phi'(s) \rangle ds + \int_{0}^{T} \langle u(s)\cdot\nabla\theta(s), \phi(s) \rangle ds = \langle \theta_0, \phi(\cdot, 0) \rangle 
			\end{aligned}
		\end{equation}
		for all mean-free, space-periodic scalar test functions $\phi(x, t) \in C^{\infty}(\Omega \times [0, T])$, such that $\phi(x, T) = 0$; and for all mean-free, space-periodic vector-valued test functions ${\Phi}(x, t) \in [C^{\infty}(\Omega \times [0, T])]^2$ such that $\nabla \cdot {\Phi}(\cdot, t) = 0$, ${\Phi}(\cdot, T) = 0$.
	\end{Definition}
	
	\begin{Theorem}[\cite{Rapha2008}]\label{thm2.4}
		Suppose \((u_0,\theta_0)\in \mathcal{H}\times L^{2}\). Then, for any \(T > 0\), the system (1.1) has a unique solution \((u,\theta)\) such that
		\begin{align*}
			{u}&\in C([0,T],\mathcal{H})\cap L^{2}([0,T],V), \qquad\theta\in C([0,T],L^{2})
		\end{align*}
		and
		\begin{align*}
			\frac{\partial{u}}{\partial t}&\in L^{2}([0,T],V'), \qquad \frac{\partial\theta}{\partial t}\in L^{4}([0,T],H^{-3/2}).
		\end{align*}
		If additionally, $u_0\in V$, then
		\begin{align*}
			{u}\in C([0,T],V)\cap L^{2}([0,T],\mathcal{D}(A)),~ \frac{\partial{u}}{\partial t}\in L^{2}([0,T],\mathcal{H}), \text{\rm and}
			~\frac{\partial\theta}{\partial t}\in L^{4}([0,T],H^{-1}).
		\end{align*}
		
	\end{Theorem}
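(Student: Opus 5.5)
The statement is the classical global well-posedness result for \eqref{1.1} with $L^2$ temperature. I would prove it by Galerkin approximation for the velocity, a transport argument for the temperature, and a uniqueness argument in negative Sobolev norms.

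\textbf{Approximate problems.} Project the velocity equation onto $\mathrm{span}\{\omega_1,\dots,\omega_N\}$ and mollify the temperature data. For each $N$ the velocity $u_N$ is then smooth in space, so $\theta_N$ can be solved by transporting along the flow of $u_N$. The result is a coupled ODE system that is solvable locally.

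\textbf{Uniform bounds.} These rest on three observations.
\begin{itemize}
\item Since $\nabla\cdot u_N=0$, the transport equation conserves $\|\theta_N(t)\|_2=\|\theta_0\|_2$.
\item Testing the velocity equation with $u_N$ gives $\frac{d}{dt}\|u_N\|_2^2+\nu\|\nabla u_N\|_2^2\le C\|\theta_0\|_2^2/(\nu\lambda_1)$. This yields uniform bounds in $L^\infty(0,T;\mathcal H)\cap L^2(0,T;V)$.
\item From the equations themselves, $\partial_t u_N$ is bounded in $L^2(0,T;V')$. For the temperature, $u_N\theta_N\in L^{4}(0,T;L^{4/3})$, because $u_N\in L^4(0,T;L^4)$ by Ladyzhenskaya's inequality. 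This gives $\partial_t\theta_N$ bounded in $L^4(0,T;H^{-3/2})$, using $L^{4/3}\subset H^{-1/2}$ in 2D.
\end{itemize}

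\textbf{Passage to the limit and regularity.} Aubin--Lions gives strong convergence of $u_N$ in $L^2(0,T;\mathcal H)$, and $\theta_N$ converges weakly-$*$ in $L^\infty(0,T;L^2)$. The limit passes in \eqref{jiedeshizi}, since $u\theta$ is a product of a strongly and a weakly convergent sequence.
\begin{itemize}
\item Continuity $u\in C([0,T],\mathcal H)$ follows from Lions--Magenes.
\item For $\theta\in C([0,T],L^2)$, first get weak continuity from $\partial_t\theta\in L^4H^{-3/2}$. Then use the DiPerna--Lions renormalization, since $u\in L^2 H^1$, to show $\|\theta(t)\|_2$ is constant. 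Weak continuity together with continuity of the norm gives strong continuity.
\item If $u_0\in V$, test with $Au_N$. The nonlinear term vanishes on $\mathbb T^2$ ($\langle B(u,u),Au\rangle=0$ for periodic 2D flow), so we get $u\in L^\infty V\cap L^2\mathcal D(A)$ and $\partial_tu\in L^2\mathcal H$. Then $u\theta\in L^4 L^2$, hence $\partial_t\theta\in L^4H^{-1}$.
\end{itemize}

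\textbf{Uniqueness (main obstacle).} With $\theta$ only in $L^2$, the difference $\delta\theta$ of two solutions cannot be estimated in $L^2$. I would follow Danchin--Paicu and estimate $\delta\theta$ in $H^{-1}$ through a stream function $\psi$ with $\Delta\psi=\delta\theta$. Equivalently, use Lagrangian coordinates, as in Hmidi--Keraani.

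Write $\delta u=u_1-u_2$, with $u_i$ the two velocities. The term $\langle \delta u\cdot\nabla\theta_2,\psi\rangle$ requires $\nabla u\in L^1_tL^\infty$, which is not available for $H$-level data. So I would do the following:
\begin{itemize}
\item Use parabolic smoothing: $tu\in L^2\mathcal D(A)$, hence $\nabla u\in L^1(0,T;\mathrm{BMO})$ or $L^1(0,T;L^p)$ with $p$ growing like $\sqrt p$. This gives a log-Lipschitz-type bound.
\item Run an Osgood/Yudovich-type argument on $Y(t)=\|\delta u\|_{H^{-1}}^2+\|\delta\theta\|_{H^{-1}}^2$. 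Choose $p$ depending on $Y$ so that the resulting integral inequality forces $Y\equiv0$.
\end{itemize}
Making the $L^p$-growth constants compatible with the Osgood condition is where I expect the real difficulty. Everything else is standard.
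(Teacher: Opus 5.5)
The paper gives no proof of this theorem. It is quoted from Danchin--Paicu \cite{Rapha2008}, so your sketch has to stand on its own. The existence and regularity part does stand. The energy bounds are correct, as is $u_N\theta_N\in L^4(0,T;L^{4/3})\subset L^4(0,T;H^{-1/2})$. Strong continuity of $\theta$ from weak continuity plus DiPerna--Lions renormalization is also correct, since $u\in L^2(0,T;H^1)$ and $\theta\in L^\infty(0,T;L^2)$ have exactly conjugate exponents. So is the $V$-level estimate via $\langle B(u,u),Au\rangle=0$. For $u\theta\in L^4(0,T;L^2)$, note that $L^\infty(0,T;V)\cap L^2(0,T;\mathcal D(A))\subset L^4(0,T;H^{3/2})\subset L^4(0,T;L^\infty)$. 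The gap is uniqueness, which is the real content of the cited theorem, and there your plan does not close as written.

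\emph{The functional is mismatched.} With $-\Delta\psi=\delta\theta$, the coupling term is $\langle\delta u\cdot\nabla\theta_2,\psi\rangle=-\langle\theta_2,\delta u\cdot\nabla\psi\rangle$. It involves three factors that are only in $L^2$, so it can be handled only if $\delta u$ carries $H^1$ dissipation. At the $H^{-1}$ level you dissipate only $\nu\|\delta u\|_2^2$, and Young's inequality then leaves a power of $Y$ close to $1/2$, which proves nothing. You need $Z=\|\delta u\|_2^2+\|\nabla\psi\|_2^2$ instead. Then the buoyancy term satisfies $|\langle\delta\theta e_2,\delta u\rangle|\le\|\nabla\psi\|_2\|\nabla\delta u\|_2$. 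The coupling term is handled with $\|\delta u\|_{L^p}\le C\sqrt p\|\nabla\delta u\|_2$ and by interpolating $\|\nabla\psi\|_{L^{2p/(p-2)}}$ between $L^2$ and $L^{2p}$, using $\|\nabla\psi\|_{L^q}\le C\sqrt q\,\|\delta\theta\|_2\le 2C\sqrt q\,\|\theta_0\|_2$. Both terms are then absorbed into $\nu\|\nabla\delta u\|_2^2$ at the cost of $Cp\,Z^{1-2/(p-1)}$.

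\emph{The Lipschitz requirement sits in the other term.} It is $\langle u_1\cdot\nabla\delta\theta,\psi\rangle=-\int\partial_k\psi\,\partial_ku_1^j\,\partial_j\psi\,dx$ that needs it, and this is bounded by $\|\nabla u_1\|_{L^p}\|\nabla\psi\|_{L^{2p/(p-1)}}^2\le C\|\nabla u_1\|_{L^p}Z^{1-1/(p-1)}$.

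\emph{The input you propose is not available.} Smoothing gives only $\int_0^T t\|Au\|_2^2\,dt<\infty$, so $\|Au\|_2$ is not integrable at $t=0$. Moreover, $\nabla u\in L^1(0,T;\mathrm{BMO})$ is not available for general $u_0\in\mathcal H$: already for the heat flow, $\|\nabla e^{\nu t\Delta}u_0\|_{\mathrm{BMO}}$ can behave like $1/(t\log(1/t))$. The pointwise bound $\|\nabla u\|_{L^p}\le C\sqrt p\|Au\|_2$ is not integrable in time either. What you actually get comes from Gagliardo--Nirenberg, $\|f\|_{L^p}\le C\sqrt p\|f\|_2^{2/p}\|\nabla f\|_2^{1-2/p}$, together with H\"older in time against $t^{-1/2+1/p}$, whose $L^2(0,T)$ norm is of order $\sqrt p$. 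This yields $\int_0^T\|\nabla u\|_{L^p}\,dt\le Cp\,\varepsilon(T)$, with $\varepsilon(T)\to0$ as $T\to0$ uniformly in $p$.

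That linear growth in $p$ is enough, and it is exactly the step you left open. For $Z\le1$ the bounds above give $Z'\le C(1+\|\nabla u_2\|_2^2)Z+C(p+\|\nabla u_1\|_{L^p})Z^{1-2/(p-1)}$. Hence $Z(t)\le\bigl(C(t+\varepsilon(t))\bigr)^{(p-1)/2}$, and letting $p\to\infty$ gives $Z\equiv0$ on a short interval $[0,t_0]$. For $t\ge t_0$ one has $u_i\in L^2(t_0,T;\mathcal D(A))$, so $\int_{t_0}^{t_0+\delta}\|\nabla u_1\|_{L^p}\le C\sqrt{p\delta}$, and the same argument propagates to $[0,T]$.

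Two smaller points. First, the smoothing bound must hold for an arbitrary solution in the class, not just the Galerkin limit. It does, because for given $\theta\in L^\infty(0,T;L^2)$ the velocity is the unique Leray--Hopf solution of 2D Navier--Stokes with force $\theta e_2$. Second, the Lagrangian route of Hmidi--Keraani is not an equivalent alternative here. It needs $\nabla u\in L^1(0,T;L^\infty)$, which is precisely what fails for $\mathcal H$-data.
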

		
	\subsection{Some a priori estimates and results in \cite{Biswas2017}}
	We recall in this subsection some results obtained in \cite{Biswas2017}.
	
	From Theorem \ref{thm2.4} we see that the solution operator of system \eqref{1.1} define a semigroup $\{S(t)\}_{t\geqslant 0}$ on $\mathcal{H}\times L^2$:
	\[
	S(t):~\mathcal{H}\times L^2\to \mathcal{H}\times L^2 ~\text{with}~(u_0,\theta_0)\to (u(t),\theta(t)),
	\]
	where $(u(t),\theta(t))$ is the solution, corresponding to the initial data $(u_0,\theta_0)$, given in Theorem \ref{thm2.4} at time $t$.
	
	Then, as shown in \cite{Biswas2017}, the corresponding solution $(u(t),\theta(t))$ satisfies
	\begin{equation}
		\|\theta(t)\|_2 = \|\theta_0\|_2 \quad \forall \ t \geq 0,
	\end{equation}
	and
	\begin{equation}
		\|u\|_2^2 \leq e^{-\nu\lambda_1 t}\|u_0\|_2^2 + \nu^2 G^2\big(1 - e^{-\nu\lambda_1 t}\big),
	\end{equation}
	where $G = \dfrac{\|\theta_0\|_2}{\nu^2\lambda_1}$ is the dimensionless, time-independent Grashof-type number of \eqref{1.1}.
	
	Thus, there exists a time $t_* = t_*(\|u_0\|_2)$ such that, for any $t > t_*$, $\|u(t)\|_2$ is in the ball of radius $2\nu G$ in $\mathcal{H}$. For example, $t_*$ can be taken as
	\begin{equation}
		t_*(\|u_0\|_2) = \frac{1}{\nu\lambda_1} \max\Big\{1,\log \frac{\|u_0\|_2^2}{3\nu^2 G^2}\Big\}.
	\end{equation}
	
	\begin{Lemma}[\cite{Biswas2017}]
		For each fixed $t \geq 0$ and any sequence $({u_{0,n}}, \theta_{0,n})$ weakly converging to $({u_0}, \theta_0)$ in $\mathcal{H}\times L^2(\Omega)$, the solution semigroup satisfies:
		\[
		S(t)({u_{0,n}}, \theta_{0,n}) \rightharpoonup S(t)({u_0}, \theta_0) \quad \text{weakly in } \mathcal{H} \times L^2(\Omega).
		\]
	\end{Lemma}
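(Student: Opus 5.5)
The plan is to pass to the limit $n\to\infty$ in the weak formulation \eqref{jiedeshizi} along a suitable subsequence, using the compactness available for the velocity together with the weak convergence of the temperature. Fix $t>0$ and write $(u_n(s),\theta_n(s))=S(s)(u_{0,n},\theta_{0,n})$. The case $t=0$ is trivial.

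\textbf{Uniform bounds.} Weakly convergent sequences are bounded, so $\|u_{0,n}\|_2+\|\theta_{0,n}\|_2\le M$. Conservation $\|\theta_n(s)\|_2=\|\theta_{0,n}\|_2$ and the energy inequality give $u_n$ bounded in $L^\infty(0,T;\mathcal{H})\cap L^2(0,T;V)$. From the velocity equation, $\partial_t u_n$ is bounded in $L^{2}(0,T;V')$; in 2D the standard estimate $\|B(u,u)\|_{V'}\le C\|u\|_2\|\nabla u\|_2$ suffices. From $\partial_t\theta_n=-\nabla\cdot(u_n\theta_n)$ and the bound $\|u_n\theta_n\|_{L^1}\le \|u_n\|_2\|\theta_n\|_2$, $\partial_t\theta_n$ is bounded in $L^\infty(0,T;H^{-s})$ for some large $s$ (for example $s>2$, using $W^{-1,1}\subset H^{-s}$).

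\textbf{Compactness.} By Aubin--Lions, a subsequence satisfies $u_n\to u$ strongly in $L^2(0,T;\mathcal{H})$ and weakly in $L^2(0,T;V)$. By Arzel\`a--Ascoli in the weak topology, $\theta_n\to\theta$ in $C([0,T];L^2_w)$, since the sequence is bounded in $L^2$ and equicontinuous in $H^{-s}$. Similarly $u_n\to u$ in $C([0,T];\mathcal{H}_w)$.

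\textbf{Passage to the limit.} The linear terms pass directly. For the nonlinear terms, the velocity term converges since $u_n\otimes u_n\to u\otimes u$ in $L^1$. For the temperature term, $\langle u_n\cdot\nabla\theta_n,\phi\rangle=-\langle \theta_n u_n,\nabla\phi\rangle$, and $\theta_n u_n\to\theta u$ weakly in $L^1(0,T;L^1)$, which is the key product of a strongly convergent factor and a weakly convergent one. I expect this step to be the main obstacle: weak$\times$weak products do not converge, so one must use that $u_n$ converges strongly in $L^2_{t,x}$ while $\theta_n$ converges only weakly, pointwise in $t$, with a uniform bound. One verifies $\int_0^T\langle\theta_n u_n-\theta u,\nabla\phi\rangle$ splits as $\int\langle\theta_n(u_n-u),\nabla\phi\rangle$, bounded by $\|\theta_n\|_{L^\infty L^2}\|u_n-u\|_{L^2L^2}\|\nabla\phi\|_\infty\to0$, plus $\int\langle\theta_n-\theta,u\cdot\nabla\phi\rangle\to0$ by dominated convergence in $s$, using the pointwise weak convergence.

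\textbf{Identification and conclusion.} The limit $(u,\theta)$ is therefore a solution in the sense of Definition \ref{ruojiedingyi} with data $(u_0,\theta_0)$, lying in the class of Theorem \ref{thm2.4}. By uniqueness it equals $S(\cdot)(u_0,\theta_0)$. Uniqueness also implies that the whole sequence converges, and the $C([0,T];w)$ convergence yields $S(t)(u_{0,n},\theta_{0,n})\rightharpoonup S(t)(u_0,\theta_0)$ at the fixed $t$. One point to check is that the limit has the regularity required for uniqueness in \cite{Rapha2008}. Here $u\in L^2(0,T;V)\cap L^\infty(0,T;\mathcal{H})$ and $\theta\in L^\infty(0,T;L^2)$, which is the weak class; if uniqueness there requires more, I would instead exploit parabolic smoothing of $u$ for $t>0$.
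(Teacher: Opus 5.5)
The paper does not prove this lemma; it cites it from \cite{Biswas2017}. Your compactness-plus-uniqueness route is the standard one, and the paper runs the same limit passage in Step 1 of the proof of Theorem \ref{bijinyinli}. Your uniform bounds, the Aubin--Lions step for $u_n$, the weak-in-time compactness for $\theta_n$, the strong$\times$weak treatment of $\theta_n u_n$, and the final subsequence argument are all fine.

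The gap is in the identification step, which you leave conditional. The uniqueness result you have, Theorem \ref{thm2.4}, gives uniqueness only in the class $u\in C([0,T];\mathcal{H})\cap L^2(0,T;V)$, $\theta\in C([0,T];L^2)$, $\partial_t\theta\in L^4(0,T;H^{-3/2})$. Your limit is only known to satisfy $\theta\in L^\infty(0,T;L^2)\cap C([0,T];L^2_w)$. Two of the required properties are routine:
\begin{itemize}
\item $u\in C([0,T];\mathcal{H})$ follows from Lions--Magenes.
\item $\partial_t\theta=-\nabla\cdot(u\theta)\in L^4(0,T;H^{-3/2})$ holds because $u\in L^4(0,T;L^4)$ by Ladyzhenskaya's inequality and $L^{4/3}\hookrightarrow H^{-1/2}$ in two dimensions.
\end{itemize}
Strong continuity of $\theta$ in time is not routine, because a weak limit of strongly continuous trajectories need not be strongly continuous. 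Until it is established, you cannot conclude that the limit equals $S(\cdot)(u_0,\theta_0)$.

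The missing ingredient is the DiPerna--Lions transport theory \cite{DiPernaLions1989}. Your limit $\theta$ is a weak $L^\infty(0,T;L^2)$ solution of $\partial_t\theta+u\cdot\nabla\theta=0$, driven by a divergence-free field $u\in L^2(0,T;V)\subset L^1(0,T;W^{1,2})$. Such a solution is therefore renormalized, and hence $\theta\in C([0,T];L^2)$. This is exactly what the paper invokes at the corresponding point of Theorem \ref{bijinyinli}.

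Your proposed fallback, parabolic smoothing of $u$ for $t>0$, does not supply this. It gives nothing on a neighbourhood of $t=0$, where the only regularity available is $u\in L^2(0,T;V)$, which is precisely the DiPerna--Lions setting. Even away from $t=0$, better regularity of $u$ yields strong continuity of $\theta$ only through a transport-uniqueness or renormalization argument of the same kind.

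Once you insert the DiPerna--Lions step, the limit lies in the uniqueness class of Theorem \ref{thm2.4} and your argument closes.
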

	
	\section{Boussinesq system with projection}\label{timu4}
	
	For each \( N \in \mathbb{N} \), let \( P_N: L^2 \to X_N \) denote the orthogonal projection onto the finite-dimensional subspace
	\[
	X_N := \operatorname{span}\{\varphi_1, \dots, \varphi_N\},
	\]
	where $\{\varphi_j\}_{j=1}^\infty \subset H^2$ are the eigenfunctions of $-\Delta$ in $H^2$, which is an orthogonal basis of $H^1$ and an orthonormal basis of $L^2$. We consider the Cauchy problem of 
	\begin{equation}\label{touyingxitong}
		\begin{cases}
			\partial_t u+(u\cdot \nabla)u  + \nabla p - \nu \Delta u  = \theta e_2, & (x,t)\in \Omega\times (0,\infty), \\
			\partial_t \theta+P_N(u\cdot \nabla\theta) = 0, & \\
			\text{div}\, u = 0, & \\
			u(x,0) = u_0(x),\quad \theta(x,0) = \theta_0(x). &
		\end{cases}
		\tag*{$(3.1)_N$}
	\end{equation}
	
	We will employ this projected system to investigate the original Boussinesq system.
		
	\begin{Definition}\label{solution}
		Suppose $(u_0, \theta_0) \in \mathcal{H} \times X_N$. A pair $(u, \theta)$ is said to solve \ref{touyingxitong} if $ (u(t), \theta(t)) \in \mathcal{H} \times X_N $ for each \( t > 0 \), $ u $ and $ \theta $ are both periodic and mean-free, and they satisfy, for any $T>0$,
		\begin{equation}\label{jie}
			\begin{aligned}
				&-\int_0^T \langle u(s), \Phi'(s) \rangle ds + \int_0^T \langle B(u(s), u(s)), \Phi(s) \rangle ds - \langle u_0, \Phi(\cdot, 0) \rangle \\
				&\qquad =-\nu\int_0^T \langle \nabla u(s) , \nabla \Phi(s) \rangle ds + \int_0^T \langle \theta(s) e_2, \Phi(s) \rangle ds,  \\
				&-\int_0^T \langle \theta(s),\phi'(s) \rangle ds + \int_0^T \langle u \cdot \nabla \theta,P_N \phi(s) \rangle ds = \langle \theta_0,\phi(\cdot, 0) \rangle 
			\end{aligned}
		\end{equation}
		for all mean-free, space-periodic scalar test functions \( \phi(x, y, t) \in C^\infty(\Omega \times [0, T]) \), such that \( \phi(x, y, T) = 0 \); and for all mean-free, space-periodic vector-valued test functions \( \Phi(x, y, t) \in [C^\infty(\Omega \times [0, T])]^2 \) such that \( \nabla \cdot \Phi(\cdot, t) = 0 \), \( \Phi(\cdot, T) = 0 \).
	\end{Definition}
	
	The existence of the solutions for the system \ref{touyingxitong} is derived with a standard Galerkin procedure from the energy inequality. Nevertheless, in order to more completely demonstrate that the dissipativity of the projected system \ref{touyingxitong} is the same as that of the original system \eqref{1.1}, and to ensure that certain boundedness properties can be more clearly presented in subsequent proofs, we would like to write it out in full here.
	
	\begin{Theorem} \label{touyingcunzaiweiyi}
		For each $N=1,2,\cdots$, let $(u_0,\theta_0)\in \mathcal{H}\times  X_N $, then the projection system \ref{touyingxitong} has a unique solution $(u, \theta)$ such that
		\[
		(u, \theta) \in L_{\text{loc}}^\infty(\mathbb{R}_+, \mathcal{H}) \cap L_{\text{loc}}^2(\mathbb{R}_+; V) \times L_{\text{loc}}^\infty(\mathbb{R}_+, X_N),
		\]
		\[
		\partial_t u \in L^2(0,T;V'),\quad \partial_t \theta \in L^2\bigl(0,T;H^{-2}\bigr),
		\]
		and
		\[
		u \in C\bigl([0,T];\mathcal{H}\bigr),\quad \theta \in C\bigl([0,T];L^2\bigr).
		\]
	\end{Theorem}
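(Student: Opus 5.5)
We prove existence by a Galerkin scheme in the velocity only, since the temperature already lives in the finite-dimensional space $X_N$. Let $Q_m$ be the orthogonal projection of $\mathcal{H}$ onto $\operatorname{span}\{\omega_1,\dots,\omega_m\}$. Consider the ODE system
\[
\partial_t u_m + Q_m B(u_m,u_m) + \nu A u_m = Q_m P_\sigma(\theta_m e_2),\qquad
\partial_t \theta_m + P_N(u_m\cdot\nabla\theta_m)=0,
\]
with $u_m(0)=Q_m u_0$ and $\theta_m(0)=\theta_0$. The right-hand side is a polynomial vector field on the finite-dimensional space $Q_m\mathcal{H}\times X_N$, so a local solution exists.

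For the energy estimates, test the $\theta_m$-equation with $\theta_m\in X_N$. Since $P_N\theta_m=\theta_m$ and $\operatorname{div}u_m=0$,
\[
\langle P_N(u_m\cdot\nabla\theta_m),\theta_m\rangle=\langle u_m\cdot\nabla\theta_m,\theta_m\rangle=0,
\]
so $\|\theta_m(t)\|_2=\|\theta_0\|_2$. This identity is exactly the conservation law of \eqref{1.1}. Testing the velocity equation with $u_m$ and using Poincaré and Young gives
\[
\frac{d}{dt}\|u_m\|_2^2+\nu\|\nabla u_m\|_2^2\le \frac{\|\theta_0\|_2^2}{\nu\lambda_1}.
\]
Hence
\[
\|u_m(t)\|_2^2\le e^{-\nu\lambda_1 t}\|u_0\|_2^2+\nu^2G^2\big(1-e^{-\nu\lambda_1 t}\big),
\]
which is the same dissipative bound as in the original system. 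It yields global existence of the ODE solution and uniform bounds in $L^\infty(0,T;\mathcal{H})\cap L^2(0,T;V)$ for $u_m$, and in $L^\infty(0,T;X_N)$ for $\theta_m$.

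The time derivatives are bounded as follows. In 2D, $\|B(u,u)\|_{V'}\le C\|u\|_2\|\nabla u\|_2$, so $\partial_t u_m$ is bounded in $L^2(0,T;V')$. For $\theta$, all norms on $X_N$ are equivalent, and $\|P_N(u\cdot\nabla\theta)\|_{X_N}\le C_N\|u\|_2\|\theta\|_2$ by duality, since $\langle u\cdot\nabla\theta,\varphi\rangle=-\langle u\theta,\nabla\varphi\rangle$ with $\varphi\in X_N$ smooth. Hence $\partial_t\theta_m$ is bounded in $L^\infty(0,T;X_N)\subset L^2(0,T;H^{-2})$.

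To pass to the limit, apply the Aubin--Lions lemma to get $u_m\to u$ strongly in $L^2(0,T;\mathcal{H})$ and weakly in $L^2(0,T;V)$. By Arzelà--Ascoli in the finite-dimensional space $X_N$, $\theta_m\to\theta$ uniformly in $C([0,T];X_N)$. The nonlinear terms then converge in the weak formulation \eqref{jie}: $\langle u_m\cdot\nabla\theta_m,P_N\phi\rangle=-\langle u_m\theta_m,\nabla P_N\phi\rangle$ converges because $u_m\to u$ strongly in $L^2$ and $\theta_m\to\theta$ uniformly. The term $B(u_m,u_m)$ is handled as in 2D Navier--Stokes. Continuity $u\in C([0,T];\mathcal{H})$ follows from the Lions--Magenes lemma, and $\theta\in C([0,T];L^2)$ holds since $\theta\in W^{1,\infty}(0,T;X_N)$.

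Uniqueness is the step needing care, although it is easier than for \eqref{1.1} because $\theta$ is smooth in $x$. Let $(u_i,\theta_i)$ be two solutions and set $w=u_1-u_2$, $\eta=\theta_1-\theta_2$. The velocity difference satisfies the standard 2D estimate
\[
\frac{d}{dt}\|w\|_2^2+\nu\|\nabla w\|_2^2\le C\|w\|_2^2\|\nabla u_2\|_2^2+C\|\eta\|_2^2,
\]
using Ladyzhenskaya's inequality. For the temperature difference,
\[
\partial_t\eta=-P_N(w\cdot\nabla\theta_1)-P_N(u_2\cdot\nabla\eta).
\]
Testing with $\eta\in X_N$, the second term vanishes by incompressibility, and the first is bounded by
\[
\|w\|_2\|\nabla\theta_1\|_\infty\|\eta\|_2\le C_N\|\theta_0\|_2\,\|w\|_2\|\eta\|_2,
\]
using the inverse inequality on $X_N$. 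Gronwall's lemma applied to $\|w\|_2^2+\|\eta\|_2^2$, with the integrable weight $\|\nabla u_2\|_2^2\in L^1(0,T)$, gives $w=0$ and $\eta=0$. The main obstacle is justifying these energy identities for weak solutions, i.e.\ testing with $w$ and $\eta$ themselves. This is legitimate by the Lions--Magenes lemma for $w$, since $w\in L^2(0,T;V)$ and $\partial_t w\in L^2(0,T;V')$, and trivially for $\eta$, which is finite-dimensional and Lipschitz in time.
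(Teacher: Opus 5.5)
Your proposal is correct and follows essentially the same route as the paper: a Galerkin scheme (the paper uses $n\ge N$ modes in both variables but notes that the temperature modes beyond $N$ stay zero, so it is effectively your velocity-only scheme), conservation of $\|\theta\|_2$ together with the standard velocity energy bound, Aubin--Lions compactness to pass to the limit in the weak formulation, and Gronwall applied to $\|w\|_2^2+\|\eta\|_2^2$ for uniqueness. Your Arzel\`a--Ascoli argument in $X_N$, with $\partial_t\theta_m$ bounded in $L^\infty(0,T;X_N)$, is a minor streamlining of the paper's $H^{-2}$/Aubin--Lions step. Your cross-term bound $C_N\|\theta_0\|_2\|w\|_2\|\eta\|_2$ is the form that actually follows from the inverse inequality; the paper writes it as $C(N,\|\theta_0\|_2)\|w\|_2^2$, but the Gronwall conclusion is the same either way.
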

	\begin{proof} 
		For convenience, in the following proof, we set $\nu=1$.
		
		\textbf{Step 1}. Construct finite-dimensional approximations.
		
		As defined previously, let $\{\omega_j\}_{j=1}^\infty \subset \mathcal{D}(A)$ and $\{\varphi_j\}_{j=1}^\infty \subset H^2$ be the eigenfunctions of $A$ and $-\Delta$. Then $\{(\omega_j, \varphi_j)\}_{j=1}^\infty$ constitutes an orthogonal basis of $V \times H^1$ and an orthonormal basis of $\mathcal{H} \times L^2$.
		
		Fix now a positive integer $n\geq N$, we look for a pair $(u_n, \theta_n)$ of the form
		\begin{equation}\label{odedejie}
			u_n(t) := \sum_{j=1}^n d_n^j(t)\omega_j, \, \theta_n(t) := \sum_{j=1}^n \alpha_n^j(t)\varphi_j.
		\end{equation}
		We hope to select the coefficients $(d_n^j(t), \alpha_n^j(t))(0 \leq t \leq T, j = 1, \cdots, n)$ so that
		\begin{equation}\label{chuzhi}
			d_n^j(0) = \langle u_0, \omega_j \rangle, \, \alpha_n^j(0) = \langle \theta_0, \varphi_j \rangle \, (j = 1, \cdots, n)
		\end{equation}
		and
		\begin{equation}\label{ode}
			\begin{cases}
				\langle u_n', \omega_j \rangle + \langle  \nabla u_n(s) ,   \nabla\omega_j \rangle + \langle B(u_n, u_n), \omega_j \rangle = \langle \theta_n e_2, \omega_j \rangle, \\
				\langle\theta_n', \varphi_j \rangle + \langle u_n \cdot \nabla \theta_n, P_N\varphi_j \rangle = 0, \\
				(0 \leq t \leq T, j = 1, \cdots, n).
			\end{cases}
		\end{equation}

		According to standard existence theory for ordinary differential equations, there exists a unique local solution $(u_n, \theta_n)$ of the form (\ref{odedejie}) defined in an interval $[0, T_n]$, with $0 \leq T_n \leq T$. Furthermore, since $n \geq N$, the initial data satisfies $\alpha_n^j(0) = \langle \theta_0, \varphi_j \rangle = 0$ for all $j > N$. Consequently, from $\eqref{ode}_2$, $\theta_n$ reduces to a finite sum over the first $N$ basis functions:
		\begin{equation}\label{bijinjie}
			\theta_n(t) = \sum_{j=1}^N \alpha_n^j(t) \, \varphi_j.
		\end{equation}
		
		\textbf{Step 2}. A priori estimates about the functions $(u_n, \theta_n)$.
		
		Multiplying the equations (\ref{ode}) by $(d_n^j(t), \alpha_n^j(t))$, sum for $j = 1, \cdots, n$ and then we obtain the following estimates.
		
		About $\theta$: Thanks to \eqref{bijinjie}, we have
		\[
		\bigl\langle  \partial_t \theta_n, \theta_n \bigr\rangle + \bigl\langle u_n \cdot \nabla \theta_n, \theta_n \bigr\rangle = 0,
		\]
		then
		\[
		\frac{d}{dt} \|\theta_n\|_{L^2}^2  = 0.
		\]
		Consequently, we have \[
		\|\theta_n(t)\|_{L^2}^2 = \sum_{j=1}^N |\alpha_n^j(t)|^2 = \|\theta_0\|_{L^2}^2, \quad \forall t\in[0,T].
		\]
		Since $\theta_n(t)= \sum_{j=1}^N \alpha_n^j(t)\varphi_j \in X_N$, for $\theta$ we have the gradient estimate:
		\[
		\nabla \theta_n(t) = \sum_{j=1}^N \alpha_n^j(t)\nabla \varphi_j,
		\]
		and with the basis fixed, \(\sum_{j=1}^N \|\nabla \varphi_j\|_{L^2}^2\) is a finite constant depending on $N$. Then, by the triangle inequality and the Cauchy--Schwarz inequality, we obtain
		\begin{equation}\label{wendutiduyoujie}
			\|\nabla \theta_n(t)\|_{L^2} \leq \sum_{j=1}^N |\alpha_n^j(t)| \|\nabla \varphi_j\|_{L^2}
			\leq \|\theta_0\|_{L^2} \Big( \sum_{j=1}^N \|\nabla \varphi_j\|_{L^2}^2 \Big)^{1/2}
			\leq C(N)\|\theta_0\|_{L^2}.
		\end{equation}
		Hence $\{\theta_{n}\}$ is bounded in $L^\infty(0,T;H^1)$, and similarly, 
		\begin{equation}\label{3.14}
			\text{$\{\theta_{n}\}$ is bounded in $L^\infty(0,T;H^2)$}
		\end{equation}
		since $\{ \varphi_j\}_{j=1}^\infty \subset H^2$.
			
		About $u$: 
		The corresponding functional equations for $u_n$ is
		\begin{equation}\label{unde}
			\frac{du_n}{dt} +  A u_n + B(u_n, u_n) = \theta_n e_2 u_n,
		\end{equation}
		To bound $\|u_n\|_2$ we take the inner product of \eqref{unde} with $u_n$, obtaining
		\begin{equation}\label{0000}
			\frac{1}{2} \frac{d}{dt} \|u_n\|_2^2 + \|\nabla u_n(s)\|_2^2 = \int_\Omega \theta_n e_2 u_n dx.
		\end{equation}
		Using Young's inequality on the right-hand side and the Poincaré inequality, we obtain
		\begin{equation}\label{6}
			\begin{aligned}
				\frac{d}{dt}\|u_n\|_2^2 + \|\nabla u_n\|_2^2 \leq \dfrac{\|\theta_0\|_2^2}{\lambda_1}.
			\end{aligned}
		\end{equation}	
		Integrating both sides between $0$ and $t$ yields
		\begin{equation}\label{haosan}
			\|u_n(t)\|_2^2 +  \int_0^t \|\nabla u_n\|_2^2 ds \leq \|u_0\|_2^2 + \frac{\|\theta_0\|_2^2}{\lambda_1}  t.
		\end{equation}
		Since $\|u_n(0)\|_2 \le \|u_0\|_2$, we have the bounds
		\[
		\sup_{t\in[0,T]} \|u_n(t)\|_2^2 \le K = \|u_0\|_2^2 + \frac{\|\theta_0\|_2^2}{\lambda_1}  T
		\]
		and
		\[
		\int_0^T \|\nabla u_n\|_2^2 ds \le K
		\]
		uniformly in $n$. Thus 
		\begin{equation}\label{3.12}
			\text{$\{u_{n}\}$ is bounded uniformly (w.r.t. $n$) in $L^\infty(0,T;\mathcal{H})$ and $ L^2(0,T;V)$}.
		\end{equation}
		
	\textbf{Step 3}. Uniform bounds on the derivatives $\left\{ \left( \frac{du_n}{dt}, \frac{d\theta_n}{dt} \right) \right\}_{n \in \mathbb{N}}$.
		
		For any $\Phi \in V$ with $\|\Phi\|_{H^1} \leq 1$, we have
		\[
		\langle \partial_t u_n, \Phi \rangle = -\langle B(u_n, u_n), \Phi \rangle + \langle \Delta u_n,  \Phi \rangle + \langle \theta_n e_2, \Phi \rangle.
		\]
		Consequently
		\begin{equation}\label{gaoyijie}
			\begin{aligned}
				\langle \partial_t u_n, \Phi \rangle &\leq |\langle B(u_n, u_n), \Phi \rangle| + |\langle \nabla u_n, \nabla \Phi \rangle| + |\langle \theta_n, \Phi^2 \rangle| \\
				&\leq C \|u_n\|_2 \|\nabla u_n\|_2\|\Phi\|_{H^1}+ \| \nabla u_n\|_2 \| \nabla\Phi\|_2 + \|\theta_n\|_2 \|\Phi^2\|_2 \\
				&\leq C \|u_n\|_2 \|\nabla u_n\|_2 + \| \nabla u_n\|_2  + \|\theta_n\|_2,
			\end{aligned}
		\end{equation}
		since $\|\Phi\|_{H^1} \leq 1$. Thus
		\[
		\|\partial_t u_n\|_{H^{-1}} \leq C \|u_n\|_2 \|\nabla u_n\|_2 +  \| \nabla u_n\|_2 + \|\theta_n\|_2,
		\]
		and therefore
		\begin{equation}\label{uN dui t qiudao}
			\int_0^T \|\partial_t u_n\|_{H^{-1}}^2 ds \leq C \|u_n\|_{L^\infty(0,T;H)}^2 \int_0^T \|\nabla u_n\|_2^2 ds + \int_0^T \|\nabla u_n\|_2^2 ds + \|\theta_0\|_2^2 T.
		\end{equation}
		
		For any $\phi \in H^2(\Omega)$ with $\|\phi\|_{H^2} \leq 1$, we have
		\[
		\langle \partial_t \theta_n, \phi \rangle = -\langle u_n \cdot \nabla \theta_n,P_N \phi \rangle.
		\]
		Consequently
		\begin{align*}
			|\langle \partial_t \theta_n, \phi \rangle| &= |\langle u_n \cdot \nabla \theta_n, P_N\phi \rangle| = |\langle u_n \theta_n, \nabla (P_N\phi) \rangle| \\
			&\leq \|u_n\|_4 \|\theta_n\|_2 \|\nabla  (P_N\phi)\|_4 \\
			&\leq C \|\nabla u_n\|_2 \|\theta_n\|_2 \|\Delta  (P_N\phi)\|_2 \\
			&\leq C \|\nabla u_n\|_2 \|\theta_n\|_2\|\Delta  \phi\|_2 .
		\end{align*}
		Hence, for any test function $\phi$ with $\|\phi\|_{H^2} \le 1$,
		\[
		\|\partial_t \theta_n\|_{H^{-2}} \leq C \|\nabla u_n\|_2 \|\theta_n\|_2,
		\]
		and integrating in time yields
		\begin{equation}\label{wenduN dui t qiudao}
			\int_0^T \|\partial_t \theta_n\|_{H^{-2}}^2 ds \leq C \|\theta_0\|_2^2 \int_0^T \|\nabla u_n\|_2^2 ds.
		\end{equation}
		
		Consequently, the time derivatives satisfy \begin{equation}\label{3.20}
			(\partial_t u_n, \partial_t \theta_n) \text{ is bounded in } L^2_{\text{loc}}(\mathbb{R}_+; V^{'}) \times L^2_{\text{loc}}(\mathbb{R}_+; H^{-2}).
		\end{equation}
	
		\textbf{Step 4}. There exists $(u, \theta)$ which is a solution (in the sense of Definition \ref{jie}) to the system \ref{touyingxitong} with the initial data $(u_0, \theta_0)$.
		
		From \eqref{3.14}, \eqref{3.12} and \eqref{3.20}, we have
		\begin{equation}\label{jixianwendutiduyoujie}
			(u_n, \theta_n)  \text{ is bounded in } L_{\text{loc}}^\infty(\mathbb{R}_+, \mathcal{H}) \cap L_{\text{loc}}^2(\mathbb{R}_+; V) \times L_{\text{loc}}^\infty(\mathbb{R}_+, H^2)
		\end{equation}
		$$	(\partial_t u_n, \partial_t \theta_n) \text{ is bounded in } L^2_{\text{loc}}(\mathbb{R}_+; V^{'}) \times L^2_{\text{loc}}(\mathbb{R}_+; H^{-2}).$$
		Therefore, there is a subsequence, still denoted by \( \{(u_n, \theta_n)\} \), such that, or any (fixed) $ T > 0$,
		\begin{equation}\label{week1   star}
			(u_n, \theta_n) \stackrel{*}{\rightharpoonup} (u, \theta) \text{ in } L^\infty(0, T; \mathcal{H}) \times L^\infty(0, T; H^2),
		\end{equation}
		\begin{equation}\label{u zai}
			u_n \rightharpoonup u \text{ in } L^2(0, T; V), 
		\end{equation}
		\begin{equation}\label{wendu dui t qiudao suoshu d kongjian}
			(\partial_t u_n, \partial_t \theta_n) \rightharpoonup (\partial_t u, \partial_t \theta) \text{ in } L^2(0, T; V^{'}) \times L^2(0, T; H^{-2}).
		\end{equation}
		So we have \( u \in C([0, T]; \mathcal{H}) \) by Lions-Magenes Lemma.
		Moreover, by the Aubin-Lions lemma, there is a subsequence, still denoted by \( \{(u_n, \theta_n)\} \), such that
		\begin{equation}
			(u_n, \theta_n) \to (u, \theta) \text{ in } L^2(0, T; \mathcal{H}) \times L^2(0, T; H^{-1}),
		\end{equation}
		\begin{equation}\label{week}
			\theta_n \to \theta \text{ in }  C(0, T; L^2).
		\end{equation}
		Furthermore, convergence (\ref{week1   star})-(\ref{week}) allow us to pass to the limit in the equations for \( (u_n, \theta_n) \) to find that \( (u, \theta) \) is a solution of \ref{touyingxitong}.
		
		Note that \( (u_n, \theta_n) \) satisfy \eqref{jiedeshizi} for appropriate test functions \( (\Phi, \phi) \) chosen as in Definition \ref{solution}. The convergence of the linear terms is straightforward. It remains to show the convergence of the remaining non-linear terms. We have
		\begin{align*}
			&\Big| \int_0^T \langle B(u_n, u_n), \Phi \rangle - \langle B(u, u), \Phi \rangle ds \Big| \\
			&= \Big| \int_0^T \langle B(u_n - u, u_n), \Phi \rangle + \langle B(u, u_n - u), \Phi \rangle ds \Big| \\
			&\leq \Big| \int_0^T \langle B(u_n - u, \Phi), u_n \rangle ds \Big| + \Big| \int_0^T \langle B(u, \Phi), u_n - u \rangle ds \Big| \\
			&\leq \|u_n - u\|_{L^2(0,T;H)} \|\nabla \Phi\|_{L^2(0,T;L^\infty(\Omega))} \|u_n\|_{L^\infty(0,T;H)} \\
			&\qquad + \|u\|_{L^\infty(0,T;H)} \|\nabla \Phi\|_{L^2(0,T;L^\infty(\Omega))} \|u_n - u\|_{L^2(0,T;H)} \to 0,
		\end{align*}
		as $n \to \infty$, since $u_n \to u$ in $L^2(0,T;H)$ and $u_n$ is uniformly (w.r.t. $n$) bounded in $L^\infty(0,T;H)$.
		
		And 
		\begin{align*}
			&\Big| \int_0^T \langle u_n \cdot \nabla \theta_n, P_N\phi \rangle - \langle u \cdot \nabla \theta,P_N \phi \rangle ds \Big| \\
			&= \Big| \int_0^T \langle (u_n - u) \cdot \nabla \theta_n,P_N \phi \rangle + \langle u \cdot \nabla (\theta_n - \theta),P_N \phi \rangle ds \Big| \\
			&\leq \Big| \int_0^T \langle (u_n - u) \cdot \nabla (P_N\phi), \theta_n \rangle ds \Big| + \Big| \int_0^T \langle u \cdot \nabla(P_N\phi), \theta_n - \theta \rangle ds \Big| \\
			&\leq \|u_n - u\|_{L^2(0,T;H)} \|\nabla P_N \phi\|_{L^2(0,T;L^\infty(\Omega))} \|\theta_n\|_{L^\infty(0,T;L^2)} \\
			&\quad + \|u\|_{L^2(0,T;\mathcal{H})} \|\Delta P_N\phi\|_{L^\infty(0,T;L^\infty(\Omega))} \|\theta_n - \theta\|_{L^2(0,T;H^{-1})}\\
			&\quad + \|u\|_{L^2(0,T;V)} \|\nabla P_N\phi\|_{L^\infty(0,T;L^\infty(\Omega))} \|\theta_n - \theta\|_{L^2(0,T;H^{-1})} \to 0,
		\end{align*}
		as $n \to \infty$, since $(u_n, \theta_n) \to (u, \theta)$ in $L^2(0,T;H) \times L^2(0,T;H^{-1})$ and $\theta_n$ is uniformly bounded in $L^\infty(0,T;L^2)$.
	
		Then we have
		\begin{equation}
			\left\{
			\begin{aligned}
				&-\int_0^T \langle u(s), \Phi'(s) \rangle ds + \int_0^T \langle B(u(s), u(s)), \Phi \rangle ds - \langle u_0, \Phi(\cdot, 0) \rangle \\
				&= -\int_0^T \langle \nabla u(s) , \nabla \Phi(s) \rangle ds + \int_0^T \langle \theta(s) e_2, \Phi(s) \rangle ds, \\
				&-\int_0^T \langle \theta(s), \phi'(s) \rangle ds + \int_0^T \langle u(s) \cdot \nabla \theta(s), P_N\phi(s) \rangle ds = \langle \theta_0,  \phi(\cdot, 0) \rangle,
			\end{aligned}
			\right.
		\end{equation}
		since \( (u_{0n}, \theta_{0n}) \to (u_0, \theta_0) \) in \( \mathcal{H} \times L^2 \).
		
		Due $\theta_n(t) = \sum_{j=1}^N \alpha_n^j(t)\varphi_j \in X_N$ is finite dimensional, and\[
		\sup_{t \in [0,T]} \|\theta_n(t) - \theta(t)\|_{2} \to 0 \quad (n \to \infty),
		\] then $\theta(t)$ is also in $X_N$. Moreover, for all $t\geq0$, the $L^2$-norm is preserved along the sequence:
		\begin{equation}\label{wendububian}
			\| \theta(t)\|_{2}=\|\theta_n(t)\|_{2} =\| \theta_0\|_{2}.
		\end{equation}
		Indeed, $\theta(t)$ admits the basis expansion
		\begin{equation}\label{bijinxulie}
			\theta(t)=\sum_{j=1}^N \alpha^j(t)\varphi_j, \quad \alpha_n^j(t)\to \alpha^j(t), n\to \infty.
		\end{equation}
		A gradient estimate then follows, as that in \eqref{wendutiduyoujie}, 
		\begin{equation}\label{dituyoujie}
			\|\nabla \theta(t)\|_{2} \leq \sum_{j=1}^N |\alpha^j(t)| \|\nabla \varphi_j\|_{2}
			\leq C(N)\|\theta_0\|_{2}.
		\end{equation}
		
		Therefore, $ (u, \theta) \in \mathcal{H}\times X_N$ is a solution to the system \ref{touyingxitong} with the initial data $ (u_0, \theta_0) \in \mathcal{H}\times X_N $.
		
		\textbf{Step 5.} Uniqueness.
		
		The uniqueness of \ref{touyingxitong} is simpler than that for \eqref{1.1} since $X_N$ is finite-dimension.
		
		Suppose $( {u}_1,\theta_1, p_1)$, $( {u}_2,\theta_2, p_2)$ are two solutions of the 2D Boussinesq projection system with the same initial data $( {u}_0,\theta_0)$. Let $\omega = {u}_1 - {u}_2$ and $\xi = \theta_1 - \theta_2$. Then $(\omega, \xi)$ satisfies the following equations:
		\begin{align}
			\partial_t \omega + {u}_1 \cdot \nabla \omega + \omega \cdot \nabla {u}_2 + (-\Delta) \omega &= -\nabla(p_1 - p_2) + \xi e_2, \label{3.32}\\
			\partial_t \xi+ P_N({u}_1 \cdot \nabla \xi+\omega \cdot \nabla \theta_2 ) &= 0.\label{3.33}
		\end{align}
		Taking the $L^2$-inner product of \eqref{3.32} with \( w \) and noting that \( \nabla \cdot u_1 = 0 \), we obtain that
		\[
		\frac{1}{2} \frac{d}{dt} \|w\|_{2}^2 + \left\|\nabla w \right\|_{2}^2 = -\langle (w \cdot \nabla) u_2, w \rangle + \langle \xi e_2, w \rangle.
		\]
		The right-hand side terms can be estimated as follows:
		$$
		\left| \langle (w \cdot \nabla) u_{2}, w \rangle \right| \leq \| \nabla u_2 \|_{2} \| w \|_{4}^2
		\leq C \| \nabla u_2 \|_{2}^2 \| w \|_{2}^2 + \frac{1}{4} \| \nabla w \|_{2}^2,
		$$ 
		$$ \left| \langle \xi e_2, w \rangle \right| \leq \frac{1}{2} \|\xi\|_{2}^2 + \frac{1}{2} \|w\|_{2}^2.
		$$
		
		Taking the inner product of \eqref{3.33} with \( \xi \), yields
		\[
		\frac{1}{2}\frac{d}{dt}\|\xi\|_{2}^2 + \langle P_N({u}_1 \cdot \nabla \xi), \xi\bigr\rangle + \langle P_N(w \cdot \nabla \theta_2), \xi\rangle = 0.
		\]
		Since $P_N$ is self-adjoint and $\xi \in X_N$, we have $\langle P_N({u}_1 \cdot \nabla \xi), \xi\rangle = \langle {u}_1 \cdot \nabla \xi, \xi\rangle$. The incompressibility condition $\nabla \cdot {u}_1 = 0$ allows us to integrate by parts, giving $\langle {u}_1 \cdot \nabla \xi, \xi\rangle = 0$. Furthermore, as $P_N \xi = \xi$, we know that $\|\xi\|_{H^1}\leqslant C(N)\|\xi\|_{2}$, and the coupling term simplifies to $\langle P_N(w\cdot \nabla \theta_2), \xi\rangle = \langle w \cdot \nabla \theta_2, \xi\rangle$, and from \eqref{dituyoujie}, we finally have that
		\[
		|\langle w \cdot \nabla \theta_2, \xi\rangle|
		\leq \|w\|_{2}\|\nabla \theta_2\|_{4}\|\xi\|_{4}
		\leq C\|w\|_{2}\|\nabla \theta_2\|_{H^1}\|\xi\|_{H^1}\\
		\leq C(N,\|\theta_{0}\|_{2})\|w\|^2_{2}.
		\]
		
		Consequently,
		\begin{equation}\label{wenduguji}
			\begin{aligned}
				\frac{1}{2}\frac{d}{dt}\|\xi\|_{2}^2 = -\langle w \cdot \nabla \theta_2, \xi\rangle
				\leq C(N,\|\theta_{0}\|_{2})\|w\|^2_{2}.
			\end{aligned}
		\end{equation}
		Therefore, we have that
		\[
		\frac{d}{dt} (\|w\|_{2}^2 +\|\xi\|_{2}^2)\leq  C(N,\|\theta_{0}\|_{2})(1+ \| \nabla u_2 \|_{2}^2)(\|w\|_{2}^2 +\|\xi\|_{2}^2),
		\]
		we can rewrite this as 
		\begin{equation}\label{weiyixing}
			\|w(t)\|_2^2 + \|\xi(t)\|_2^2 \leq \left(\|w(0)\|_2^2 + \|\xi(0)\|_2^2\right) \exp\left(  C(N,\|\theta_{0}\|_{2}) \int_{0}^{t} \left(1 + \|\nabla u_2(s)\|_2^2\right) ds \right)
		\end{equation}
		which, combining with a similar estimate for $u_2$ as \eqref{haosan}, allows us to obtain the uniqueness immediately.
	\end{proof}
	
	From the proof above, we have the following estimates about the solution of \ref{touyingxitong}.
	
	\begin{Corollary}\label{yyyyyyyoujie}
		The solutions $(u(t),\theta(t))$ of system \ref{touyingxitong} satisfy
		$$\|u\|_{L^\infty(0,T;\mathcal{H})}\leq  C(\|\theta_0\|_2,\|u_0\|_2),$$
		$$\|u\|_{L^2(0,T;V)}\leq  C(T,\|\theta_0\|_2,\|u_0\|_2),$$
		$$\|\theta\|_{L^\infty(0,T;L^2)}=\|\theta_0\|_2,$$
		\begin{equation}\label{H2youjie}
			\|\theta\|_{L^\infty(0,T;H^2)}\leq C(N)\|\theta_0\|_{2},
		\end{equation}
		$$\|\partial_t u\|_{L^2(0,T;V^{'})}\leq  C(T,\|\theta_0\|_2,\|u_0\|_2),$$
		\begin{equation}\label{Hfu2}
			\|\partial_t \theta\|_{L^2(0,T;H^{-2})}\leq  C(T,\|\theta_0\|_2,\|u_0\|_2).
		\end{equation}
	\end{Corollary}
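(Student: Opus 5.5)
The plan is to obtain the corollary by passing to the limit in the uniform-in-$n$ Galerkin bounds from Steps 2--4 of the proof of Theorem \ref{touyingcunzaiweiyi}. Weak(-$*$) lower semicontinuity of norms then carries each bound over to the limit $(u,\theta)$, which is the unique solution of \ref{touyingxitong}.

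\textbf{Bounds on $u$.} For the $L^\infty(0,T;\mathcal{H})$ bound independent of $T$, I will not use \eqref{haosan}, since it grows linearly in $T$. Instead I return to \eqref{0000}. Estimating the right-hand side by $\|\theta_0\|_2\|u_n\|_2$ and using Poincaré's inequality gives
\[
\frac{d}{dt}\|u_n\|_2^2+\lambda_1\|u_n\|_2^2\le \frac{\|\theta_0\|_2^2}{\lambda_1}.
\]
Gronwall's inequality then yields $\|u_n(t)\|_2^2\le \|u_0\|_2^2+\|\theta_0\|_2^2/\lambda_1^2$ for all $t$. This is the uniform analogue of the $\nu^2G^2$ bound recalled in Section 2, and it passes to $u$ by weak-$*$ lower semicontinuity via \eqref{week1   star}. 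The $L^2(0,T;V)$ bound follows from \eqref{haosan} and \eqref{u zai}, again by lower semicontinuity. The bound on $\partial_t u$ in $L^2(0,T;V')$ follows from \eqref{uN dui t qiudao}, whose right-hand side is controlled by these two bounds, together with \eqref{wendu dui t qiudao suoshu d kongjian}.

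\textbf{Bounds on $\theta$.} Norm preservation is exactly \eqref{wendububian}. For \eqref{H2youjie}, write $\theta(t)=\sum_{j\le N}\alpha^j(t)\varphi_j$ with $\sum_j|\alpha^j|^2=\|\theta_0\|_2^2$. Cauchy--Schwarz then gives
\[
\|\theta(t)\|_{H^2}\le \Big(\sum_{j\le N}\|\varphi_j\|_{H^2}^2\Big)^{1/2}\|\theta_0\|_2,
\]
as in \eqref{dituyoujie}. Finally, \eqref{Hfu2} follows from \eqref{wenduN dui t qiudao} combined with the $L^2(0,T;V)$ bound, and weak lower semicontinuity transfers it from $\theta_n$ to $\theta$.

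\textbf{Main subtlety.} The only delicate points are, first, getting the first bound truly independent of $T$, which is handled by the Gronwall argument above, and second, making sure the constants do not depend on $n$. The latter holds because $\|u_n(0)\|_2\le\|u_0\|_2$ and $\theta_n(0)=\theta_0$ once $n\ge N$. By uniqueness, these bounds hold for the solution itself, not merely for a subsequential limit.
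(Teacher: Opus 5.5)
Your proposal is correct and follows the paper's route: the corollary is read off from the uniform Galerkin estimates in the proof of Theorem~\ref{touyingcunzaiweiyi} and passed to the limit, which by uniqueness is the solution itself. Your Gronwall step from \eqref{0000} is a worthwhile addition. On its own, \eqref{haosan} only gives a $T$-dependent bound in $L^\infty(0,T;\mathcal{H})$, whereas the corollary states that constant without $T$.
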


	From Theorem \ref{touyingcunzaiweiyi}, for each $N\in \mathbb{N}$, we can define an operators semigroup $\{S_N(t)\}_{t\geqslant 0}$ as follows:
	\[
	S_N(t): \mathcal{H} \times X_N \to \mathcal{H} \times X_N, \quad 0 \leq t < \infty,
	\]
	\[
	S_N(t)(u_0, \theta_0) := (u(t), \theta(t)) \quad \text{for any } (u_0, \theta_0) \in \mathcal{H} \times X_N,
	\]
	where $(u(t), \theta(t))$ is the solution of \ref{touyingxitong} for any $t \geq 0$. 
	
	This semigroup is weakly continuous, as can be shown by adapting the proof of Proposition 5.3 in \cite{Biswas2017}, so we do not repeat the proof here.
	\begin{Lemma}
		For each (fixed) $N\in \mathbb{N}$, fixed $t\geq0$ and a sequence $({u}_{0,n}, \theta_{0,n}) \stackrel{wk}{\rightharpoonup} ({u}_0, \theta_0)$, we have
		\[
		S_N(t)({u}_{0,n}, \theta_{0,n}) \stackrel{wk}{\rightharpoonup} S_N(t)({u}_0, \theta_0). 
		\]
		In particular, for each fixed $t$, the map $S_N(t): B \to \mathcal{H} \times X_N$ is weakly continuous, where $B$ is a bounded subset of $\mathcal{H} \times X_N$.
	\end{Lemma}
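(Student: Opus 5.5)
The plan is to adapt the proof of weak continuity of $S(t)$ from \cite{Biswas2017} (Proposition 5.3 there), using the a priori bounds of Corollary \ref{yyyyyyyoujie} and the uniqueness estimate \eqref{weiyixing}. Fix $N$, $t>0$ (the case $t=0$ is trivial) and $T>t$. Let $(u_{0,n},\theta_{0,n})\rightharpoonup(u_0,\theta_0)$ weakly in $\mathcal{H}\times X_N$. Weakly convergent sequences are bounded, so $\sup_n(\|u_{0,n}\|_2+\|\theta_{0,n}\|_2)<\infty$. Moreover, $X_N$ is finite dimensional, so the weak convergence $\theta_{0,n}\rightharpoonup\theta_0$ is in fact strong convergence in $X_N$. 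This observation is the main simplification compared to \eqref{1.1}.

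Denote by $(u_n,\theta_n)=S_N(\cdot)(u_{0,n},\theta_{0,n})$ the corresponding solutions. Corollary \ref{yyyyyyyoujie} gives bounds uniform in $n$: $u_n$ is bounded in $L^\infty(0,T;\mathcal{H})\cap L^2(0,T;V)$, $\partial_tu_n$ in $L^2(0,T;V')$, $\theta_n$ in $L^\infty(0,T;H^2)$ (with constant $C(N)$), and $\partial_t\theta_n$ in $L^2(0,T;H^{-2})$. Exactly as in Step 4 of Theorem \ref{touyingcunzaiweiyi}, Banach--Alaoglu and Aubin--Lions then yield a subsequence with the following properties:
\begin{itemize}
\item $u_n\to\tilde u$ strongly in $L^2(0,T;\mathcal{H})$ and weakly in $L^2(0,T;V)$;
\item $\theta_n\to\tilde\theta$ in $C([0,T];L^2)$, which follows from Arzel\`a--Ascoli, since $\theta_n(s)$ lies in a bounded set of the finite-dimensional space $X_N$ and is equicontinuous in $H^{-2}$, hence in $L^2$ by norm equivalence on $X_N$.
\end{itemize}
Passing to the limit in the weak formulation \eqref{jie} as in Step 4 (the nonlinear terms are handled by the same estimates), $(\tilde u,\tilde\theta)$ is a solution of \ref{touyingxitong}. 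Its initial datum is $(u_0,\theta_0)$, because $\langle u_{0,n},\Phi(\cdot,0)\rangle\to\langle u_0,\Phi(\cdot,0)\rangle$ by weak convergence and similarly for $\theta$. By the uniqueness in Theorem \ref{touyingcunzaiweiyi}, $(\tilde u,\tilde\theta)=S_N(\cdot)(u_0,\theta_0)$.

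The main obstacle is upgrading this to convergence at the fixed time $t$, rather than in an $L^2$-in-time sense. For $\theta$ there is nothing to do, since the convergence is uniform in time. For $u$, the bound on $\partial_tu_n$ in $L^2(0,T;V')$ gives equicontinuity of $u_n$ in $V'$, uniformly in $n$. Together with the uniform bound in $\mathcal{H}$, this gives $u_n\to\tilde u$ in $C([0,T];V'_{w})$, i.e.\ $u_n(t)\to\tilde u(t)$ weakly in $V'$ for each fixed $t$. Boundedness of $u_n(t)$ in $\mathcal{H}$ and density of $V$ in $\mathcal{H}$ then upgrade this to weak convergence in $\mathcal{H}$.

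Since the limit is identified uniquely, every subsequence has a further subsequence converging to the same limit $S_N(t)(u_0,\theta_0)$. Hence the whole sequence converges, which is the claim. The statement about bounded sets follows because the weak topology on bounded subsets of the separable Hilbert space $\mathcal{H}\times X_N$ is metrizable, so sequential weak continuity is equivalent to weak continuity there.
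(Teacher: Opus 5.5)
Your proposal is correct and matches what the paper intends. The paper gives no proof of its own and simply defers to adapting Proposition 5.3 of \cite{Biswas2017}, which is the same scheme you carry out: uniform bounds, Aubin--Lions and Arzel\`a--Ascoli compactness, passage to the limit in the weak formulation, identification of the limit via uniqueness, and the subsequence argument. Your observation that weak convergence in the finite-dimensional space $X_N$ is already strong convergence is the natural simplification for the projected system.
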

	
	Therefore, following the approach of \cite{Biswas2017}, one can establish the existence of a weak $\sigma$-attractor for system \ref{touyingxitong}. The sole distinction from the cited work lies in the function space chosen for the temperature component; however, the dissipation estimates remain identical to those of the original system. Consequently, we can obtain the similar results about the weak $\sigma$-attractor:
	\begin{Definition}\label{touyingxitongxiyinzi}
		For each (fixed) $N\in \mathbb{N}$,  defining the \textit{weak $\sigma$-attractor} \( \mathcal{A}_N\) of the semi-dissipative system \ref{touyingxitong} to be the set of all \( ({u}_0, \theta_0) \in \mathcal{H} \times X_N \) with the property that:
		\begin{enumerate}
			\item[(i)] There exists a global trajectory \( ({u}(t), \theta(t)) \) defined for all \( t \in \mathbb{R} \) such that \( ({u}(t), \theta(t)) \) belongs to \(  \mathcal{H} \times X_N \) and solves \ref{touyingxitong} for all \( t \in \mathbb{R} \), and moreover \( ({u}(0), \theta(0)) = ({u}_0, \theta_0) \);
			\item[(ii)] The trajectory \( ({u}(t), \theta(t)) \) is globally bounded, i.e., the set \( \{ ({u}(t), \theta(t)) : t \in \mathbb{R} \} \) is bounded in \(  \mathcal{H} \times X_N \).
		\end{enumerate}
	\end{Definition}
	And, similarly, we have
	\begin{Definition}\label{touyingxitongxiyinzir}
		The local attractor of \ref{touyingxitong} at level $r$, denoted by $\mathcal{A}_{Nr}$, is defined as the $\omega$-limit set of the positively invariant set $$\mathcal{B}_{Nr} := \bigl\{ ({u}_0,\theta_0) \in \mathcal{H} \times X_N : \|\theta_0\|_{L^2} \leq r,\ \|{u}_0\|_{L^2} \leq R(r) \bigr\},\quad R(r) := \frac{2r}{\nu \lambda_1}.$$
		That is,
		\begin{equation}\label{anr}
			\mathcal{A}_{Nr} := \omega^{wk}(\mathcal{B}_{Nr}) := \bigcap_{\tau\ge 0} \overline{\vphantom{\bigcap} \bigl\{ S_N(t)({u}_0,\theta_0) : t > \tau,\ ({u}_0,\theta_0)\in \mathcal{B}_{Nr} \bigr\}}^{wk},
		\end{equation}
		where the closure is taken in the weak topology of $\mathcal{H} \times X_N$.
	\end{Definition}
	Obviously, we have
	\begin{equation}
		\begin{aligned}
			\mathcal{A}_{Nr} = \bigl\{ (u_a,\theta_a) :\ &\exists\ t_n \to \infty \text{ and } (u_{0n},\theta_{0n}) \in \mathcal{B}_{Nr}, \\
			&\text{such that } S_N(t_n)(u_{0n},\theta_{0n}) \rightharpoonup (u_a,\theta_a) \text{ in } \mathcal{H} \times X_N\bigr\}.
		\end{aligned}
	\end{equation}

	Then, note that $X_N$ is finite-dimension,  as that in \cite{Biswas2017, sun2019}, we can obtain the following results immediately.
	\begin{Theorem}
		For each (fixed) $N\in \mathbb{N}$, the semigroup $\{S_N(t)\}_{t\geqslant 0}$ has a weak $\sigma$-attractor $\mathcal{A}_N$ in the sense of the Definition \ref{touyingxitongxiyinzi}, and has the following properties:
		\begin{enumerate}
			\item[(i)] The relation $\displaystyle \mathcal{A}_N = \bigcup_{r\ge 0} \mathcal{A}_{Nr} = \bigcup_{\substack{r\ge 0,\,r\in\mathbb{Q}}} \mathcal{A}_{Nr}$ holds, where $\mathcal{A}_{Nr}$ is defined by (\ref{anr}), and $\mathcal{A}_{Nr}$ is compact in $\mathcal{H}\times L^2$.
			
			\item [(ii)] The set $\mathcal{A}_N$ attracts all $\mathcal{H}\times X_N$-bounded sets in the strong topology of $V \times H^m(\Omega), m\in\mathbb{N}$.
		\end{enumerate}
	\end{Theorem}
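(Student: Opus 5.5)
We follow the scheme of \cite{Biswas2017} and use two extra facts: $X_N$ is finite dimensional, and by Corollary \ref{yyyyyyyoujie} the velocity equation is exactly the forced 2D Navier--Stokes system with a forcing $\theta e_2$ that is smooth, bounded in $H^m$ and has bounded time derivative. The proof of (i) comes first, then (ii).

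\textbf{Step 1 (absorbing and invariant sets).} The energy estimate \eqref{6} gives $\|u(t)\|_2^2\le e^{-\nu\lambda_1 t}\|u_0\|_2^2+\|\theta_0\|_2^2(\nu\lambda_1)^{-2}(1-e^{-\nu\lambda_1 t})$. Since $\|\theta(t)\|_2=\|\theta_0\|_2$ by \eqref{wendububian}, each $\mathcal{B}_{Nr}$ is positively invariant, and every bounded set enters $\mathcal{B}_{Nr}$ in finite time once $r\ge\sup\|\theta_0\|_2$. The $\theta$-component is then confined to the compact set $\{\theta\in X_N:\|\theta\|_2\le r\}$.

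\textbf{Step 2 (compactness of $u$).} Take the inner product of the $u$-equation with $Au$ and use the periodic identity $\langle B(u,u),Au\rangle=0$. This gives $\frac{d}{dt}\|\nabla u\|_2^2+\nu\|Au\|_2^2\le \nu^{-1}\|\theta\|_2^2$, and the uniform Gronwall lemma yields a bound in $V$ for $t\ge1$. Higher norms come by differentiating in $x$: since $\theta$ is bounded in $H^m$ with $\|\theta\|_{H^m}\le C(N,m)r$ (finite dimensionality, as in \eqref{dituyoujie}), the standard bootstrap for 2D periodic Navier--Stokes gives bounds on $u$ in $H^{m+1}$, uniform for $t\ge t_0(m)$ on $\mathcal{B}_{Nr}$. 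Hence $\bigcup_{t\ge1}S_N(t)\mathcal{B}_{Nr}$ is relatively compact in $V\times H^m$ for every $m$. Consequently the weak closures in \eqref{anr} coincide with strong closures, and $\mathcal{A}_{Nr}$ is a nonempty compact invariant set in $\mathcal{H}\times L^2$ (indeed in $V\times H^m$). It attracts $\mathcal{B}_{Nr}$ strongly by the usual contradiction argument: take a sequence $S_N(t_n)x_n$ staying a fixed distance away from $\mathcal{A}_{Nr}$, extract a convergent subsequence by compactness, and note its limit lies in $\mathcal{A}_{Nr}$, a contradiction.

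\textbf{Step 3 (identifying $\mathcal{A}_N$ with $\bigcup_r\mathcal{A}_{Nr}$).} If $x\in\mathcal{A}_{Nr}$, invariance of the compact $\omega$-limit set gives a complete trajectory through $x$ inside $\mathcal{A}_{Nr}$. Weak continuity of $S_N(t)$ (the preceding Lemma) lets us pass limits through $S_N(t)$, and a diagonal argument builds the backward orbit; this trajectory is bounded, so $x\in\mathcal{A}_N$. Conversely, let $x$ lie on a bounded complete trajectory with $\|\theta\|_2\equiv\rho$ and $\sup_t\|u(t)\|_2\le M$. Then $x=S_N(s)(u(-s),\theta(-s))$ for all $s$, and after a time $t_*(M)$ the orbit lies in $\mathcal{B}_{N\rho}$. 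So $x$ is a limit of $S_N(t_n)y_n$ with $y_n\in\mathcal{B}_{N\rho}$ and $t_n\to\infty$, hence $x\in\mathcal{A}_{N\rho}$. The sets $\mathcal{A}_{Nr}$ increase with $r$, so restricting to rational $r$ loses nothing.

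\textbf{Step 4 (property (ii)).} A bounded set $B$ lies in some $\mathcal{B}_{Nr}$ after time $t_*$. By Step 2 it is attracted to $\mathcal{A}_{Nr}\subset\mathcal{A}_N$ in $\mathcal{H}\times L^2$, and interpolating with the uniform $V\times H^{m+1}$ bounds upgrades the attraction to the $V\times H^m$ topology.

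\textbf{Main obstacle.} The hard part is the higher-regularity bootstrap in Step 2, i.e.\ uniform $H^{m+1}$ bounds for $u$ with a forcing depending on the solution. I would handle it by induction on $m$, applying $\partial^\alpha$ to the vorticity equation $\partial_t\omega+u\cdot\nabla\omega-\nu\Delta\omega=\partial_{x_1}\theta$ and using the uniform bound on $\|\theta\|_{H^{m+1}}$ from finite dimensionality.
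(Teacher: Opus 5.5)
Your proposal is correct and follows essentially the route the paper indicates. The paper gives no detailed proof beyond citing \cite{Biswas2017, sun2019} and the Remark that follows the theorem, and you reproduce the same ingredients: the Biswas--Foias--Larios $\omega$-limit construction over the positively invariant sets $\mathcal{B}_{Nr}$, the finite dimensionality of $X_N$, and parabolic regularity of the 2D Navier--Stokes equations. The one real difference is that you derive the strong $V\times H^m$ compactness and attraction directly from uniform $H^{m+1}$ bounds on $u$ (available because $\theta(t)\in X_N$ is uniformly smooth), whereas the paper invokes [Lemma 5.2, \cite{Biswas2017}] and [Theorem 3.3, \cite{sun2019}] for the velocity attraction in $V$ and then upgrades the $\theta$-component using $\dim X_N<\infty$.
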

	
	\begin{Remark}
		Obviously, $(0,0)\in \mathcal{A}_{Nr}\cap\mathcal{A}_{N'r}$ whence $N\leq N'$. However, there may be some other fixed points in the intersection, for example, when the basis of $X_N$ contains some fixed points of the $\theta$-equation, then some steady state solution $\{(0,\theta^{v})\}$ will also in $\mathcal{A}_{Nr}\cap\mathcal{A}_{N'r}$. However, whether there exists a common continuous evolutionary trajectory connecting $\mathcal{A}_{Nr}$ and $\mathcal{A}_{N'r}$ is unknown.
	\end{Remark}
	%
			\begin{Remark}
				From [Lemma 5.2, \cite{Biswas2017}], we know that if $(u,\theta)\in\mathcal{A}_{Nr}$, then there exists a dimensionless, absolute constant $C>0$ such that
				\[
				\|u\|_{\mathbb{H}^1} \leq C \lambda_1^{1/2}R(r) \quad \forall\, t \in \mathbb{R}.
				\] 
				Since $X_N$ is finite-dimension, combining with the result [Theorem 3.3, \cite{sun2019}] that the attraction properties about $u$ is in the sense of the strong topology of $V$, we know that the attraction in system \ref{touyingxitong} can be the strong topology of $\mathcal{H}\times L^2$. Actually, $\mathcal{A}_{Nr}$ is a compact subset of $V \times H^m(\Omega)$ ($m\in\mathbb{N}$) and the closure of $\mathcal{A}_{Nr}$ in \eqref{anr} can be taken in the strong topology of $V \times H^m(\Omega)$. Furthermore, by the parabolic regularity of the 2D Navier–Stokes equations and the smoothness of the eigenfunctions of the Laplacian on the periodic domain, we can have  $\mathcal{A}_N$ is infinitely smooth.
			\end{Remark}

			
\section{Structures of $\mathcal{A}_N$ and $\mathcal{A}$}\label{timu5}
			
\subsection{Project $\mathcal{A}_N$ onto $X_N$}
			
			We next perform the projection $P_{\theta}$ on the attractor $\mathcal{A}_N$, recall that $P_{\theta}({u},\theta) := \theta$. Owing to the invariant property of the temperature $L^2$-norm for this system, each component $\mathcal{A}_{Nr}$ in the internal decomposition of the weak $\sigma$-attractor $\mathcal{A}_N$ can be regarded as an independent local attractor, thus we project these small attractors $\mathcal{A}_{Nr}$ directly.
			
			We first deduce the following continuity as the preliminary. 
\begin{Lemma}\label{lianxu}
Define $B = \{ \theta \in  X_N : \|\theta\|_{2} < r + 1 \} $ is an open set of $X_N$. For each fixed \( t > 0 \) and define the mapping $[0, 1] \times \overline{B} \ni (\eta, x) \mapsto P_\theta S_N(\eta t)(0,x) \in X_N$, then the mapping is continuous from $[0,1] \times \overline{B}$ into $X_N$. 
\end{Lemma}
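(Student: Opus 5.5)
We plan to prove joint continuity by splitting into continuity in the initial datum (uniform in time on $[0,t]$) and continuity in time for a fixed datum, then combining them with a triangle inequality. Write $(u^x(s),\theta^x(s)) := S_N(s)(0,x)$ for $x\in\overline{B}$ and $s\in[0,t]$. Because $\overline{B}$ is bounded in $X_N$, Corollary \ref{yyyyyyyoujie} with $u_0=0$ and $\|\theta_0\|_2\le r+1$ supplies constants independent of $x\in\overline B$: a bound $M_1$ for $\sup_{s\le t}\|u^x(s)\|_2$, a bound $M_2$ for $\int_0^t\|\nabla u^x\|_2^2\,ds$, and bounds for $\|\partial_t\theta^x\|_{L^2(0,t;H^{-2})}$ and $\|\theta^x\|_{L^\infty(0,t;H^2)}\le C(N)(r+1)$.

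\textbf{Step 1 (continuity in $x$, uniformly in $\eta$).} We will reuse the stability estimate \eqref{weiyixing} from Step 5 of the proof of Theorem \ref{touyingcunzaiweiyi}. That argument never used equality of the initial data. Applied to two solutions starting from $(0,x)$ and $(0,y)$, it gives
\[
\sup_{s\in[0,t]}\big(\|u^x(s)-u^y(s)\|_2^2+\|\theta^x(s)-\theta^y(s)\|_2^2\big)\le \|x-y\|_2^2\exp\big(C(N,r+1)(t+M_2)\big).
\]
The exponent is uniform over $\overline B$. The constant $C(N,\|\theta_0\|_2)$ in \eqref{wenduguji} enters only through $\|\nabla\theta_2\|_{H^1}\le C(N)\|\theta_0\|_2$, so it can be taken monotone in $\|\theta_0\|_2$ and hence bounded by its value at $r+1$. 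This yields $\|\theta^x(\eta t)-\theta^y(\eta t)\|_2\le C\|x-y\|_2$ for all $\eta\in[0,1]$, i.e.\ Lipschitz continuity in $x$ uniformly in $\eta$. The main point requiring care here is confirming that Step 5 carries over verbatim to distinct initial data and that its constants are uniform.

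\textbf{Step 2 (continuity in $\eta$ for fixed $x$).} Theorem \ref{touyingcunzaiweiyi} gives $\theta^x\in C([0,t];L^2)$, so $\eta\mapsto\theta^x(\eta t)$ is continuous into $X_N$. Alternatively, we can argue directly: $\partial_t\theta^x\in L^2(0,t;H^{-2})$ makes $\theta^x$ continuous into $H^{-2}$, and since $\theta^x$ takes values in the finite-dimensional space $X_N$, where all norms are equivalent, it is also continuous in $L^2$. Even more concretely, the coefficients $\alpha^j$ satisfy $\dot\alpha^j=-\langle u\cdot\nabla\theta,\varphi_j\rangle$, whose absolute value is at most $C(N)\|u\|_2\|\theta\|_{H^1}$. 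Hence the coefficients are Lipschitz in time, with constant uniform on $\overline B$ by Corollary \ref{yyyyyyyoujie}.

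\textbf{Step 3 (joint continuity).} For $(\eta_n,x_n)\to(\eta,x)$ we estimate
\[
\|\theta^{x_n}(\eta_nt)-\theta^x(\eta t)\|_2\le \|\theta^{x_n}(\eta_nt)-\theta^{x}(\eta_nt)\|_2+\|\theta^{x}(\eta_nt)-\theta^x(\eta t)\|_2 .
\]
The first term is at most $C\|x_n-x\|_2$ by Step 1. The second term tends to zero by Step 2. Together these prove the lemma.
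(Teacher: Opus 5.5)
Your argument is correct. It uses the same two ingredients as the paper, the Gronwall stability bound \eqref{weiyixing} and the time regularity of $\theta$, but it places the intermediate point of the triangle inequality differently.

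The paper writes $\|\theta_n(t_n)-\theta(t)\|_2\le\|\theta_n(t_n)-\theta_n(t)\|_2+\|\theta_n(t)-\theta(t)\|_2$, so the time increment is taken along the perturbed trajectories. This forces it to prove equicontinuity in time uniformly over the data. It does so via the uniform bounds \eqref{H2youjie}, \eqref{Hfu2} and the interpolation $\|f\|_2\le\|f\|_{H^{-2}}^{1/2}\|f\|_{H^{2}}^{1/2}$, which yield a modulus $C|t_n-t|^{1/4}$. The stability estimate is then needed only at the single fixed time $t$.

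You instead pass through $\theta^{x}(\eta_n t)$, so the data increment is evaluated at moving times. That costs nothing, because the right-hand side of \eqref{weiyixing} is nondecreasing in time, and you correctly checked that its constant is uniform over $\overline B$. The time increment then lives on one fixed trajectory, so $\theta^x\in C([0,t];L^2)$ from Theorem \ref{touyingcunzaiweiyi} suffices, with no uniform derivative bound and no interpolation.

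Your route is a bit leaner and gives Lipschitz dependence on $x$ uniformly in $\eta$. The paper's route gives an explicit time modulus uniform over $\overline B$. Your observation $|\dot\alpha^j|\le C(N)\|u\|_2\|\theta\|_{H^1}$ already gives a uniform Lipschitz modulus in time. So with your estimates either split works, and that observation improves the paper's H\"older exponent $1/4$ to $1$.
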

				\begin{proof}
					
					Let $ R^{+}\times \overline{B}\ni (t_n,\theta_{0n})\to (t,\theta_{0})$ in $R^{+}\times \overline{B}$ as $n\to\infty$, and denote $$\theta_{n}(t_n)=P_{\theta}(u_n(t_n),\theta_{n}(t_n)):=P_{\theta}S_N(t_n)(0,\theta_{0n}),$$
					$$\theta(t)=P_{\theta}(u(t),\theta(t)):=P_{\theta}S_N(t)(0,\theta_{0}).$$ We aim to show $\theta_{n}(t_n)\to \theta(t)$ as $n\to \infty$. By the triangle inequality
\[
	\|\theta_n(t_n) - \theta(t)\|_{2} \leq \|\theta_n(t_n) - \theta_n(t)\|_{2} + \|\theta_n(t) - \theta(t)\|_{2}:=I_1+I_2.
\]
The convergence $I_2\to 0$ as $n\to \infty$ analogous to the proof of uniqueness in Theorem \ref{touyingcunzaiweiyi}, and by \eqref{weiyixing}, which immediately implies that if $\theta_n(0)\to \theta(0)$, then $\theta_n(t)\to\theta(t)$ for each $t\geq 0$.
				
					It remains to prove that  $I_1\to 0$ as $n\to \infty$.
					Without loss of generality, we assume that $t_n \in [0,T]$.	Then, we have \begin{equation}
						\begin{aligned}
							\|\theta_{n}(t_n)-\theta_{n}(t)\|_{H^{-2}}&\leq \Big|\int_{t}^{t_n} \|\frac{d}{d\tau}\theta_n(\tau) \|_{H^{-2}}  d\tau\Big|\\
							&\leq \Big|\int_{t}^{t_n} \|\frac{d}{d\tau}\theta_n(\tau) \|_{H^{-2}} ^2 d\tau\Big|^{\frac{1}{2}}|t_n-t|^{\frac{1}{2}}\\
							&\leq \|\partial_t \theta_n\|_{L^2(0,T;H^{-2})}|t_n-t|^{\frac{1}{2}}.
						\end{aligned}
					\end{equation}
					Using \eqref{H2youjie}, \eqref{Hfu2} and the interpolation inequality, we have
					\begin{equation}
						\|\theta_{n}(t_n)-\theta_{n}(t)\|_{2}\leq 	\|\theta_{n}(t_n)-\theta_{n}(t)\|_{H^{-2}}^{\frac{1}{2}}\|\theta_{n}(t_n)-\theta_{n}(t)\|_{H^{2}}^{\frac{1}{2}}\leq 
						C(N,\|\theta_0\|_2,T)|t_n-t|^{\frac{1}{4}}
					\end{equation}
					
					Therefore,
					\[
					\|\theta_n(t_n) - \theta_n(t)\|_{2} \leq C(N,\|\theta_0\|_{2},T) |t_n-t|^{\frac{1}{4}}\to 0 ,\ \text{as}\ n\to \infty.
					\]
				\end{proof}

			In the following, we will use the Homotopy invariance of Brouwer degree to obtain the expected existence.
			\begin{Theorem}\label{touyingdingli}
				For each fixed $N\in\mathbb{N}$, about system \ref{touyingxitong}, for every $r\geq0$, we have $P_\theta\mathcal{A}_{Nr}=\{\theta\in X_N: \|\theta\|_{2}\leq r\}$, which implies $P_\theta\mathcal{A}_N=X_N$.
			\end{Theorem}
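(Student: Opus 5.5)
The plan is to prove the two inclusions separately. The nontrivial one, $\{\theta\in X_N:\|\theta\|_2\le r\}\subseteq P_\theta\mathcal{A}_{Nr}$, will come from a degree argument applied to the finite-dimensional map $\theta_0\mapsto P_\theta S_N(t)(0,\theta_0)$ on $X_N$, followed by a limit $t\to\infty$.

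\emph{Easy inclusion.} Let $(u_a,\theta_a)\in\mathcal{A}_{Nr}$. Then $S_N(t_n)(u_{0n},\theta_{0n})\rightharpoonup(u_a,\theta_a)$ for some $t_n\to\infty$ and $(u_{0n},\theta_{0n})\in\mathcal{B}_{Nr}$. By \eqref{wendububian}, $\|P_\theta S_N(t_n)(u_{0n},\theta_{0n})\|_2=\|\theta_{0n}\|_2\le r$. Weak lower semicontinuity of the norm gives $\|\theta_a\|_2\le r$. Once the reverse inclusion is proved, taking the union over $r$ yields $P_\theta\mathcal{A}_N=X_N$.

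\emph{Main step: degree argument at a fixed time.} Fix $\theta^*\in X_N$ with $\|\theta^*\|_2\le r$ and fix $t>0$. Let $B=\{\theta\in X_N:\|\theta\|_2<r+1\}$ as in Lemma \ref{lianxu}, and define
\[
H(\eta,x):=P_\theta S_N(\eta t)(0,x)-\theta^*,\qquad (\eta,x)\in[0,1]\times\overline{B}.
\]
By Lemma \ref{lianxu}, $H$ is continuous from $[0,1]\times\overline B$ into $X_N\cong\mathbb{R}^N$. We need $H$ to have no zeros on $[0,1]\times\partial B$. For $x\in\partial B$, conservation of the temperature norm gives $\|P_\theta S_N(\eta t)(0,x)\|_2=\|x\|_2=r+1$, while $\|\theta^*\|_2\le r$, so $H(\eta,x)\neq0$. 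Homotopy invariance of the Brouwer degree then gives
\[
\deg(H(1,\cdot),B,0)=\deg(\mathrm{Id}-\theta^*,B,0)=1,
\]
since $\theta^*\in B$. Hence there is $\theta_0^t\in B$ with $P_\theta S_N(t)(0,\theta_0^t)=\theta^*$. Norm conservation also forces $\|\theta_0^t\|_2=\|\theta^*\|_2\le r$, so $(0,\theta_0^t)\in\mathcal{B}_{Nr}$.

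\emph{Passage to the limit.} Take $t_n\to\infty$ and the corresponding data $(0,\theta_0^{t_n})\in\mathcal{B}_{Nr}$. Write $S_N(t_n)(0,\theta_0^{t_n})=(u_n,\theta^*)$. Since $\mathcal{B}_{Nr}$ is positively invariant (the $u$-estimate \eqref{6} is identical to that of the original system), $\|u_n\|_2\le R(r)$. Extract a subsequence with $u_n\rightharpoonup u^*$ in $\mathcal{H}$. The $\theta$-component is constantly $\theta^*$, so $S_N(t_n)(0,\theta_0^{t_n})\rightharpoonup(u^*,\theta^*)$. By the sequential characterization of $\mathcal{A}_{Nr}$ given after Definition \ref{touyingxitongxiyinzir}, $(u^*,\theta^*)\in\mathcal{A}_{Nr}$, which proves the reverse inclusion. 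The case $r=0$ is trivial: $(0,0)\in\mathcal{A}_{N0}$.

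\emph{Expected obstacle.} The delicate point is the continuity of $H$ jointly in $(\eta,x)$, including at $\eta=0$, where $H$ must reduce to $x-\theta^*$. This is exactly what Lemma \ref{lianxu} provides: the uniqueness estimate \eqref{weiyixing} handles continuity in $x$, and the $H^{-2}$–$H^{2}$ interpolation handles continuity in time. Two further points need checking. First, the degree must be computed in the finite-dimensional space $X_N$, which is why the projected system is needed. Second, the boundary condition depends on the exact conservation $\|\theta(t)\|_2=\|\theta_0\|_2$, which holds for the projected system by \eqref{wendububian}.
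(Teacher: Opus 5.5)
Your proposal is correct and follows the paper's proof essentially step for step. The paper uses the same homotopy $(\eta,x)\mapsto P_\theta S_N(\eta t)(0,x)$ on the ball of radius $r+1$ in $X_N$, continuity from Lemma \ref{lianxu}, degree one inherited from the identity at $\eta=0$, and then $t_n\to\infty$ with a weakly convergent subsequence placed in $\mathcal{A}_{Nr}$ via the sequential characterization. You additionally make explicit two facts the paper only asserts: norm conservation keeps $P_\theta S_N(\eta t)(0,\partial B)$ on the sphere of radius $r+1$, so it misses $\theta^*$, and it forces $\|\theta_0^t\|_2=\|\theta^*\|_2\le r$, so that $(0,\theta_0^t)\in\mathcal{B}_{Nr}$.
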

			\begin{proof}
				It is obviously that $P_\theta\mathcal{A}_{Nr}\subseteq P_\theta\mathcal{B}_{Nr}=\{\theta\in X_N: \|\theta\|_{2}\leq r\}$.
				
				In the following we will establish the converse inclusion.	Define \( B = \{ \theta \in X_N : \|\theta\|_{2} < r + 1 \}, \) an open subset in $X_N$.
				For every $t\geq0$, $P_\theta S_N( t) (0, \cdot): B\to X_N$ is continuous by Lemma \ref{lianxu}. Then for any $ p\in \{\theta\in X_N: \|\theta\|_{2}\leq r\}$, we can define the Brouwer degree for the mapping $P_{\theta}S_N(t)(0,\cdot)$.
				
				For every $t\geq0$, we consider the mapping $	H_t(\eta,x)=P_\theta S_N(\eta t) (0,x);
				[0, 1] \times \overline{B}  \mapsto X_N$, then from Lemma \ref{lianxu} we have $H_t(\eta,x)$ is continuous with respect to $(\eta,x)$, and for all   $p\in\{\theta\in X_N: \|\theta\|_{2}\leq r\}$, \( p \notin P_{\theta}S_N(\eta t)(0,\partial B) \) for every \( \eta \in [0, 1] \).
				
				When $\eta=0$, we have $P_{\theta}S_N(\eta t)(0,x)=Id(x)=x$, and $\deg(Id, B, p) = 1$. Hence, by the homotopy invariance of the Brouwer degree, cf. [\cite{oregan2006}, Theorem 1.2.6 (3)] we deduce that $\deg(P_{\theta}S_N(t)(0,\cdot), B, p) = 1$, consequently, cf. [\cite{oregan2006}, Theorem 1.2.6 (2)], there exists $x_t \in B \) such that \( P_{\theta}S_N(t)(0,x_t) = p$.
				
				Let \( t_n \to \infty \), there exists a sequence $x_n \in B$ such that $P_{\theta}S_N(t_n)(0,x_n) \equiv p$. Hence we can find $ q_n \in \mathcal{H} $ such that $ y_n = (0,p) + (q_n,0) = S_N(t_n)(0,x_n) \in S_N(t_n)\mathcal{B}_{N_{r+1}}$. Actually $(0,x_n)\in \mathcal{B}_{Nr}$ and $y_n\in S_N(t_n){\mathcal{B}_{Nr}}$, since $(0,p)\in \mathcal{B}_{Nr}$. 
				
				Because $\{y_n\}$ is bounded, we can find a subsequence $y_n\rightharpoonup y$, by the equivalent definition of $\omega^{wk}(\mathcal{B}_{N{r}})$, we can know $y\in \omega^{wk}(\mathcal{B}_{N{r}})$ and $P_{\theta}(y)=P_{\theta}(y_n)=p$.
			\end{proof}
			
			\begin{Remark}
				In fact, if we define $\widetilde{\mathcal{B}}_{Nr} := \{(u_0,\theta_0) \in \mathcal{H} \times X_N: \|\theta_0\|_2 =r,\, \|u_0\|_2 \leq R(r)= \frac{2r}{\nu\lambda_1}\}$, then the proof of Theorem \ref{touyingdingli} can directly yields $P_{\theta}\omega^{wk}(\widetilde{\mathcal{B}}_{Nr})=\{\theta\in X_N: \|\theta\|_{2}= r\}$.
			\end{Remark}
			\begin{Remark}\label{more interesting}
				Inspection of the proof of Lemma \ref{lianxu} and Theorem \ref{touyingdingli} reveals that for every $N\in \mathbb{N}, r\geq0$, if we fix $u_0 \in P_u\mathcal{B}_{Nr}$ (where $P_u({u},\theta) :=u$) and set the mapping $	H_t(\eta,x)=P_\theta S_N(\eta t) (u_0,x);
				[0, 1] \times \overline{B}  \mapsto X_N$, we still can obtain the result of Theorem \ref{touyingdingli}.
				That is, for each fixed $u_0 \in \mathcal{H}$ with $\|u_0\|_2 \le R(r)$, set $\widetilde{\mathcal{B}}_{Nr}^{u_0}:=\{(u_0,\theta):\theta \in  X_N, \|\theta\|_{2}= r\} $, then we still have  $P_\theta\omega^{wk}(\widetilde{\mathcal{B}}_{Nr}^{u_0})=\{\theta\in X_N: \|\theta\|_{2}= r\}$.
				
				This suggests that: if initial temperatures $\theta_{0}$ taken from the sphere $\mathbb{S}_{Nr} := \{\theta \in X_N : \|\theta\|_2 = r\}$ and fix $u_0$ satisfying $\|u_0\|_2 \leq R(r)$, the temperature component fills the entire sphere $\mathbb{S}_{Nr}$ again as $t \to \infty$. 
			\end{Remark}

			\subsection{$\mathcal{A}_{Nr}$ approximates $\mathcal{A}_{r}$ as $N\to \infty$}
			For system \eqref{1.1}, fix $r \geq 0$, we can get local attractor $\mathcal{A}_{r}$ at level $r$ as defined in Definition $\ref{yuanxitongxiyinzir}$;
			For system \ref{touyingxitong}, fix $N \in \mathbb{N}$, $r \geq 0$, we can get local attractor $\mathcal{A}_{Nr}$ at level $r$ as defined in Definition \ref{touyingxitongxiyinzir}. We now state a key theorem establishing the connection between $\mathcal{A}_{Nr}$ and the global attractor $\mathcal{A}_{r}$.
			
\begin{Theorem}\label{bijinyinli}
For every fixed $r>0$, let $(u_{N0},\theta_{N0})\in\mathcal{A}_{Nr}$ ($N=1,2,\cdots$). If $(u_{N0},\theta_{N0})\rightharpoonup(u_0,\theta_{0})$ in $\mathcal{H}\times L^2$ as $N\to \infty$, then $(u_0,\theta_{0})\in \mathcal{A}_r$.
\end{Theorem}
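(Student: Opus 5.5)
We will show that $(u_0,\theta_0)$ lies on a globally bounded complete trajectory of \eqref{1.1} with $\|\theta(t)\|_2\le r$ and $\|u(t)\|_2\le R(r)$. By the characterization in Lemma \ref{xiyinzidingyi} and Lemma \ref{yuanxitongxiyinzir}, this places it in $\mathcal{A}$, and in fact in $\mathcal{A}_r$. Indeed, such a trajectory satisfies $(u(-t),\theta(-t))\in\mathcal{B}_r$ and $S(t)(u(-t),\theta(-t))=(u_0,\theta_0)$ for every $t>0$, so $(u_0,\theta_0)\in\omega^{wk}(\mathcal{B}_r)=\mathcal{A}_r$ directly from \eqref{ar}.

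\textbf{Step 1: complete trajectories of the projected systems.} Since $(u_{N0},\theta_{N0})\in\mathcal{A}_{Nr}$, there is a complete trajectory $(u_N(t),\theta_N(t))$, $t\in\mathbb{R}$, of \ref{touyingxitong} passing through it at $t=0$ and staying in the weak closure of $\bigcup_{t} S_N(t)\mathcal{B}_{Nr}$. This is the standard invariance of weak $\omega$-limit sets, obtained as in \cite{Biswas2017} by taking weak limits of $S_N(t_n-s)(u_{0n},\theta_{0n})$ for each $s$ and using the weak continuity of $S_N$. It gives the bounds $\|\theta_N(t)\|_2\le r$ and $\|u_N(t)\|_2\le R(r)$ for all $t$, uniformly in $N$. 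The energy inequality \eqref{haosan}, applied from time $-T$, then gives $\int_{-T}^T\|\nabla u_N\|_2^2\,ds\le C(T,r)$ uniformly in $N$.

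\textbf{Step 2: $N$-uniform time-derivative bounds and compactness.} The estimate for $\partial_t u_N$ in $L^2(-T,T;V')$ from Step 3 of Theorem \ref{touyingcunzaiweiyi} is already $N$-independent. For $\theta_N$ we cannot use the $H^2$ bound \eqref{H2youjie}, which depends on $N$. We redo the estimate with test functions $\phi\in H^{3}$ (or $H^s$, $s>2$), using that $P_N$ is bounded on $H^s$ uniformly in $N$ because it is a spectral projection:
\[
|\langle u_N\theta_N,\nabla P_N\phi\rangle|\le\|u_N\|_2\|\theta_N\|_2\|\nabla P_N\phi\|_\infty\le C\|u_N\|_2\|\theta_N\|_2\|\phi\|_{H^3}.
\]
This bounds $\partial_t\theta_N$ in $L^\infty(-T,T;H^{-3})$ uniformly in $N$. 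Aubin--Lions applied to $u_N$, together with an Arzel\`a--Ascoli argument in $C([-T,T];L^2_w)$ for $\theta_N$ (bounded in $L^2$, equicontinuous in $H^{-3}$), and a diagonal argument over $T\in\mathbb{N}$, produce a subsequence with
\[
u_N\to u \text{ in } L^2_{loc}(\mathbb{R};\mathcal{H}), \quad u_N\rightharpoonup u \text{ in } L^2_{loc}(\mathbb{R};V), \quad \theta_N\to\theta \text{ in } C_{loc}(\mathbb{R};L^2_w)\cap C_{loc}(\mathbb{R};H^{-1}).
\]
We also get $u_N(t)\rightharpoonup u(t)$ for every $t$. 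In particular $(u(0),\theta(0))=(u_0,\theta_0)$, and the bounds $\|\theta(t)\|_2\le r$, $\|u(t)\|_2\le R(r)$ survive by weak lower semicontinuity.

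\textbf{Step 3: passing to the limit in the equations (main obstacle).} The $u$-equation passes to the limit exactly as in Step 4 of Theorem \ref{touyingcunzaiweiyi}. The real difficulty is the transport term $\int\langle u_N\theta_N,\nabla P_N\phi\rangle$, because $\theta_N$ converges only weakly in $L^2$. The plan is to write $P_N\phi=\phi+(P_N\phi-\phi)$. The error $P_N\phi-\phi\to0$ in $C^1$ for smooth $\phi$, and $u_N\theta_N$ is bounded in $L^\infty_tL^1_x$, so this piece vanishes. For the main term we use strong convergence of $u_N$ in $L^2_tL^2_x$ together with $\theta_N\to\theta$ in $C_t(L^2_w)$: the product of a strongly and a weakly convergent sequence tested against the fixed $\nabla\phi\in L^\infty$ converges. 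More carefully, $\langle (u_N-u)\theta_N,\nabla\phi\rangle$ is small by Cauchy--Schwarz, and $\langle u\,\theta_N,\nabla\phi\rangle\to\langle u\,\theta,\nabla\phi\rangle$ because $u\cdot\nabla\phi\in L^2$.

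Hence $(u,\theta)$ is a weak solution of \eqref{1.1} on every interval $[-T,T]$. By uniqueness (Theorem \ref{thm2.4}) it coincides with $S(t+T)(u(-T),\theta(-T))$, so it is a genuine bounded complete trajectory of \eqref{1.1} through $(u_0,\theta_0)$. The conclusion $(u_0,\theta_0)\in\mathcal{A}_r$ then follows from the opening paragraph.
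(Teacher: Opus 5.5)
Your strategy matches the paper's. You take the bounded complete trajectories of the projected systems through $(u_{N0},\theta_{N0})$, use $N$-uniform energy and time-derivative bounds, extract a limit by Aubin--Lions, and pass to the limit in the weak formulation. You then identify the limit as a bounded complete trajectory of \eqref{1.1} lying in $\mathcal{B}_r$.

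There are two differences in execution:
\begin{itemize}
\item You extract a single diagonal subsequence over $T\in\mathbb{N}$ and obtain the limit on all of $\mathbb{R}$ at once. The paper instead passes to the limit on one segment, rebuilds the backward orbit iteratively with nested subsequences, and glues the pieces by uniqueness. Your bookkeeping is cleaner.
\item For the transport term, you use strong $L^2_tL^2_x$ convergence of $u_N$ against $C(L^2_w)$ convergence of $\theta_N$, after splitting off $P_N\phi-\phi$. The paper pairs $\theta_N\to\theta$ in $L^2(H^{-1})$ with the $L^2(V)$ bound on $u_N$. Both are valid.
\end{itemize}
You did not need to redo the $\partial_t\theta_N$ estimate. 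The paper's $L^2(0,T;H^{-2})$ bound is already $N$-independent, because $\|\Delta P_N\phi\|_2\le\|\Delta\phi\|_2$. Your $H^{-3}$ bound is equally fine.

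The one step you skip is one the paper handles explicitly, and you need it before invoking uniqueness. As stated, Theorem \ref{thm2.4} gives uniqueness in the class with $u\in C(\mathcal{H})\cap L^2(V)$ and $\theta\in C(L^2)$ strongly. Your compactness argument only gives $\theta\in C([-T,T];L^2_w)$. Each $\theta_N$ has constant norm, but the weak limit only satisfies $\|\theta(t)\|_2\le\liminf_N\|\theta_N(t)\|_2$, and the loss could depend on $t$. So strong continuity, and hence membership in the uniqueness class, does not follow from the limit alone.

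The paper closes this with DiPerna--Lions. The limit $\theta$ is an $L^\infty(L^2)$ weak solution of a transport equation driven by the divergence-free field $u\in L^2(V)$, so it is renormalized. Therefore $\|\theta(t)\|_2$ is constant, which together with weak continuity gives $\theta\in C(L^2)$. You should also state $u\in C(\mathcal{H})$, which follows from Lions--Magenes. With these two facts added, identifying your limit with $S(t+T)(u(-T),\theta(-T))$ is justified. Your concluding argument that $(u_0,\theta_0)\in\omega^{wk}(\mathcal{B}_r)$ then goes through, and it is more explicit than the paper's.
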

\begin{proof}	
			
				\textbf{Step 1: Construct the trajectory segment.}
				
				For each $N\in\mathbb{N} $, there is a bounded complete trajectory
				\[
				\{(u_N(t),\theta_{N}(t)):t\in\mathbb{R}\}\subseteq\mathcal{A}_{Nr}\subseteq\mathcal{B}_{Nr}\subseteq\mathcal{B}_r\tag{$*$}\label{xing}
				\]
				such that $(u_{N}(0),\theta_{N}(0))=(u_{N0},\theta_{N0})$.

				Fix $T\geq2$. We take a segment of the trajectory:
				\[
				\{(u_{N}(t-T),\theta_{N}(t-T)): t\in [0,T+1]\}.
				\] 
				
				From Corollary \ref{yyyyyyyoujie}, we have
				\begin{equation}\label{Nyoujie}
					\{(u_{N}(\cdot-T),\theta_{N}(\cdot-T))\} \text{ is bounded in } L^\infty(0,T+1;\mathcal{H}) \cap L^2(0,T+1;V)\times L^\infty(0,T+1;L^2),
				\end{equation}
				\begin{equation}\label{partial un}
					(\partial_t u_N(\cdot-T), \partial_t \theta_N(\cdot-T)) \text{ is bounded in } L^2(0,T+1; V^{'}) \times L^2(0,T+1; H^{-2}).
				\end{equation}
				
				From the Aubin-Lions lemma, there is a subsequence, to simplify the presentation, we still denoted by \( \{(u_N(\cdot-T), \theta_N(\cdot-T))\} \), such that
				\begin{equation}\label{qiang}
					(u_N(\cdot-T), \theta_N(\cdot-T)) \to (u_{\infty}(\cdot-T), \theta_{\infty}(\cdot-T)) \text{ in } C(0, T+1; V^{'})\times C(0, T+1; H^{-1})
				\end{equation}
				and by Banach–Alaoglu theorem, without loss of generality, we assume that
				\begin{equation}\label{bijinguanjian}
					(u_N(-T), \theta_N(-T)) {\rightharpoonup}(u^{(1)}_0,\theta^{(1)}_{0})\in 
					\mathcal{B}_r\quad \text{ in $\mathcal{H}\times L^2$}.
				\end{equation}
				
				Next, we will show that $  (u_{\infty}(\cdot-T), \theta_{\infty}(\cdot-T))$ satisfies \eqref{jiedeshizi} with $t\in[0,T+1]$ for all mean-free, space-periodic scalar test functions $\phi(x, t) \in C^{\infty}(\Omega \times [0, T+1])$, such that $\phi(x, T+1) = 0$; and for all mean-free, space-periodic vector-valued test functions ${\Phi}(x, t) \in [C^{\infty}(\Omega \times [0, T+1])]^2$ such that $\nabla \cdot {\Phi}(\cdot, t) = 0$, ${\Phi}(\cdot, T+1) = 0$.
				
				Note that $(u_N(\cdot-T), \theta_N(\cdot-T))$ satisfy \ref{touyingxitong}:
				\[
				\left\{
				\begin{aligned}
					&-\int_0^{T+1} \langle u_N(s-T), \Phi'(s) \rangle ds + \int_0^{T+1} \langle B(u_N(s-T), u_N(s-T)), \Phi(s) \rangle ds - \langle u_{N}(-T), \Phi(\cdot, 0) \rangle \\
					&\qquad = -\nu\int_0^{T+1}\langle \nabla u_N(s-T) , \nabla \Phi(s) \rangle ds + \int_0^{T+1} \langle \theta_N(s-T) e_2, \Phi(s) \rangle ds,  \\
					&-\int_0^{T+1}\langle \theta_N(s-T),\phi'(s) \rangle ds + \int_0^{T+1} \langle u_N \cdot \nabla \theta_N,P_N \phi(s) \rangle ds = \langle \theta_{N}(-T),\phi(\cdot, 0) \rangle.
				\end{aligned}
				\right.
				\]
				Passing to the limits as $N\to \infty$, the convergence analysis about $u$ is proceeds analogously to Step 4 of Theorem \ref{touyingcunzaiweiyi}, and the linear terms is obvious since from \eqref{Nyoujie} and \eqref{partial un} we have
				\begin{equation}\label{week1 N  star}
					(u_N(\cdot-T), \theta_N(\cdot-T)) \stackrel{*}{\rightharpoonup} (u_{\infty}(\cdot-T), \theta_{\infty}(\cdot-T)) \text{ in } L^\infty(0, T+1; \mathcal{H}) \times L^\infty(0, T+1; L^2).
				\end{equation}
				
				It remains to establish the convergence of the remaining non-linear terms of the $\theta$.
				To this end, note that from \eqref{Nyoujie} and \eqref{partial un} we also have
				\begin{equation}\label{qiangshoulian}
					(u_N(\cdot-T), \theta_N(\cdot-T)) \to (u_{\infty}(\cdot-T), \theta_{\infty}(\cdot-T)) \text{ in } L^2(0, T+1; \mathcal{H}) \times L^2(0, T+1; H^{-1}).
				\end{equation}
				Then combine  \eqref{Nyoujie}, \eqref{partial un} and \eqref{qiangshoulian}, we have

				\[
				\begin{aligned}
					&\Big|\int_0^{T+1} \langle u_N \cdot \nabla \theta_N,P_N \phi \rangle ds-\int_0^{T+1} \langle u_{\infty}\cdot \nabla \theta_{\infty},\phi\rangle ds\Big|\\
					&=\Big|\int_0^{T+1} \langle\theta_N, u_N \cdot \nabla (P_N\phi)\rangle ds - \int_0^{T+1} \langle\theta_{\infty}, u_{\infty}\cdot \nabla \phi\rangle ds \Big|\\
					&=\Big| \int_0^{T+1} \langle\theta_N - \theta_{\infty}, u_N \cdot \nabla (P_N\phi)\rangle ds
					+ \int_0^{T+1} \langle\theta_{\infty}, (u_N - u_{\infty}) \cdot \nabla (P_N\phi)\rangle ds \\
					&\quad + \int_0^{T+1} \langle\theta_{\infty}, u_{\infty}\cdot \nabla (P_N\phi - \phi)\rangle ds\Big|\\
					&	\leq \|\theta_N - \theta_{\infty}\|_{L^2(0,{T+1};H^{-1})}\|u_N\|_{L^2(0,{T+1};\mathcal{H})} \|\Delta (P_N\phi)\|_{L^{\infty}(0,{T+1};L^\infty(\Omega))} \\
					&\quad+ \|\theta_N - \theta_{\infty}\|_{L^2(0,{T+1};H^{-1})}\|u_N\|_{L^2(0,{T+1};V)} \|\nabla (P_N\phi)\|_{L^{\infty}(0,{T+1};L^\infty(\Omega))} \\
					&\quad + \|\theta_{\infty}\|_{L^\infty(0,{T+1};L^2)} \|u_N - u_{\infty}\|_{L^2(0,{T+1};\mathcal{H})} \|\nabla (P_N\phi)\|_{L^2(0,{T+1};L^\infty(\Omega))}\\
					&\quad + \|\theta_{\infty}\|_{L^\infty(0,{T+1};L^2)} \|u_{\infty}\|_{L^\infty(0,{T+1};\mathcal{H})} \int_0^{T+1} \| \nabla (P_N\phi - \phi)\|_{L^\infty(\Omega)} ds \\
					&\leq \|\theta_N - \theta_{\infty}\|_{L^2(0,{T+1};H^{-1})}\|u_N\|_{L^2(0,{T+1};\mathcal{H})} \| \phi\|_{L^{\infty}(0,{T+1};H^4(\Omega))}  \\
					&\quad +\|\theta_N - \theta_{\infty}\|_{L^2(0,{T+1};H^{-1})}\|u_N\|_{L^2(0,{T+1};V)} \| \phi\|_{L^{\infty}(0,{T+1};H^3(\Omega))}  \\
					&\quad + \|\theta_{\infty}\|_{L^\infty(0,{T+1};L^2)} \|u_N - u_{\infty}\|_{L^2(0,{T+1};\mathcal{H})} \| \phi\|_{L^2(0,{T+1};H^3(\Omega))}\\
					&\quad + \|\theta_{\infty}\|_{L^\infty(0,{T+1};L^2)} \|u_{\infty}\|_{L^\infty(0,{T+1};\mathcal{H})} \int_0^{T+1} \| \nabla (P_N\phi - \phi)\|_{L^\infty(\Omega)} ds \\
					&\to 0 \qquad \text{as $N \to \infty$.}
				\end{aligned}
				\]
				
				Consequently,
				\[	
				\left\{
				\begin{aligned}
					&-\int_0^{T+1} \langle u_{\infty}(s-T), \Phi'(s) \rangle ds + \int_0^{T+1} \langle B(u_{\infty}(s-T), u_{\infty}(s-T)), \Phi \rangle ds - \langle u_0^{(1)}, \Phi(\cdot, 0) \rangle \\
					&\qquad= -\nu\int_0^{T+1} \langle \nabla u_{\infty}(s-T) , \nabla \Phi(s) \rangle ds + \int_0^{T+1} \langle \theta_{\infty}(s-T) e_2, \Phi(s) \rangle ds, \\
					&-\int_0^{T+1} \langle \theta_{\infty}(s-T), \phi'(s) \rangle ds + \int_0^{T+1} \langle u_{\infty}(s-T) \cdot \nabla \theta_{\infty}(s-T), \phi(s) \rangle ds = \langle \theta_0^{(1)},  \phi(\cdot, 0) \rangle,
				\end{aligned}
				\right.
				\]
				since \((u_N(-T), \theta_N(-T)) {\rightharpoonup}(u^{(1)}_0,\theta^{(1)}_{0})\) in $\mathcal{H}\times L^2$. 
				Combining \eqref{Nyoujie} and \eqref{partial un} with  Lions-Magenes lemma, we have $u_{\infty}(\cdot-T)\in C(0,T+1;\mathcal{H})$. Thanks to \cite{DiPernaLions1989}, and note that  $\theta_{\infty}(\cdot-T)$ is transported by the divergence-free vector-field $u_{\infty}(\cdot-T) \in L^2(0,T+1;V)$, we can obtain in addition that $\theta_{\infty}(\cdot-T) \in C(0,T+1;L^2)$. This implies that
				\[
				(u_{\infty}(t-T),\theta_{\infty}(t-T))=S(t)(u^{(1)}_0,\theta^{(1)}_0),\qquad t\in [0, T+1],
				\]
				combining \eqref{qiang} with $(u_{N0},\theta_{N0})\stackrel{wk}{\rightharpoonup}(u_0,\theta_{0})$ in $\mathcal{H}\times L^2$, by the uniqueness of limits, we have
				\[
				(u_{\infty}(0),\theta_{\infty}(0))=(u_0,\theta_0).
				\]
				Therefore, we have \[S(T)(u^{(1)}_0,\theta^{(1)}_0)=(u_0,\theta_0).\]

				\textbf{Step 2: Construct inductions.}
				
				Proceeding recursively Step 1 for $(u_0^{(1)},\theta_0^{(1)})$, we can take a subsequence of the trajectory sequence \eqref{xing}, that is:
				$$\{(u_{N_j}(t-2T),\theta_{N_j}(t-2T)): t\in [0,T+1]\}$$ such that $(u_{N_j}(-T),\theta_{N_j}(-T))\rightharpoonup (u_0^{(1)},\theta_0^{(1)})$ and
				satisfies
				\begin{equation}\label{qiang2}
					(u_{N_j}(\cdot-2T), \theta_{N_j}(\cdot-2T)) \to (u_{\infty}(\cdot-2T), \theta_{\infty}(\cdot-2T)) \text{ in } C(0, T+1; V^{'})\times C(0, T+1; H^{-1}),
				\end{equation}
				and
				\[	(u_{N_j}(-2T), \theta_{N_j}(-2T)) {\rightharpoonup}(u^{(2)}_0,\theta^{(2)}_{0})\in \mathcal{B}_r\quad \text{ in $\mathcal{H}\times L^2$}.\]

				Then, from the process of Step 1, we can get
				\[
				(u_{\infty}(t-2T),\theta_{\infty}(t-2T))=S(t)(u^{(2)}_0,\theta^{(2)}_0),\qquad t\in [0, T+1],
				\]
				and
				\[
				S(T)(u^{(2)}_0,\theta^{(2)}_0)=(u_0^{(1)},\theta_0^{(1)}).
				\]
				
				\textbf{Step 3: Obtain the bounded complete trajectory passing through $(u_0,\theta_0)$. }
				
				Repeating Step 2 iteratively, for every $(u^{(k)}_0,\theta^{(k)}_0), k=0,1,2,\cdots$, (write $(u_0,\theta_0)$ as $(u^{(0)}_0,\theta^{(0)}_0)$), there is a $(u^{(k+1)}_0,\theta^{(k+1)}_0)\in \mathcal{B}_r$, such that 
				\[
				S(T)(u^{(k+1)}_0,\theta^{(k+1)}_0)=(u_0^{(k)},\theta_0^{(k)}).
				\]
				
				Then from the uniqueness of solution, we can define a complete trajectory $(u_{\infty}(t),\theta_{\infty}(t))$ as follows: $k=1,2,\cdots$,
				\[
				(u_{\infty}(t),\theta_{\infty}(t))=
				\left\{
				\begin{aligned}
					&	S(t+kT)(u^{(k)}_0,\theta^{(k)}_0),\quad t\in[-kT,-{(k-1)}T],\\
					&S(t)(u_0,\theta_0),\quad t\geq 0.
				\end{aligned}
				\right.
				\]

				That is, we obtain a bounded complete trajectory $\{(u_{\infty}(t),\theta_{\infty}(t)) :t\in\mathbb{R}\}\subseteq \mathcal{B}_r$ with
				$(u_{\infty}(0),\theta_{\infty}(0))=(u_0,\theta_{0})$, and it is continuous with respect to $t\in \mathbb{R}$ in $\mathcal{H}\times L^2$. Consequently, from Definition \ref{xiyinzidingyi}, it is obvious that $(u_0,\theta_{0})\in \mathcal{A}_r$.
			\end{proof}

			Now, we are ready to prove our main result, Theorem \ref{main-thm}.
			\begin{proof}[Proof of Theorem \ref{main-thm}]
				For every $\theta\in L^2$, we can find  $\theta_{N}\in  X_N$ ($N=1,2,\cdots$) such that $\theta_{N}\to \theta$, and without loss of generality, we may assume that $\|\theta_N\|_2= \|\theta\|_2=r$. Then, from Theorem \ref{touyingdingli}, there exists $u_N$ such that $(u_N,\theta_{N})\in \mathcal{A}_{Nr}$. For the sequence $\{(u_N,\theta_{N})\}_{N=1}^{\infty}$, there is subsequence $\{(u_{N_k},\theta_{N_k})\}_{k=1}^{\infty}$ such that  $(u_{N_k},\theta_{N_k})\rightharpoonup (u^{'},\theta^{'})$ in $\mathcal{H}\times L^2$. Which, thanks to Theorem \ref{bijinyinli}, implies $(u^{'},\theta^{'})\in \mathcal{A}_r$. By the uniqueness of the limit, we have $\theta^{'}=\theta$; that is, $P_{\theta} (u^{'},\theta^{'})=\theta$ and $(u^{'},\theta)\in\mathcal{A}_r$.
			\end{proof}
	
			\begin{Corollary}[Weak upper semicontinuity]
				For every $r\ge 0$, we have
				\[
				\lim_{N \to \infty}d_w(\mathcal{A}_{Nr}, \mathcal{A}_r) = 0,
				\]
				where $d_w$ denotes the metric Hausdorff semi-distance induced by the weak topology of $\mathcal{H}\times L^2$ on bounded set.
			\end{Corollary}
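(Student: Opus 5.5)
The plan is to argue by contradiction, using Theorem \ref{bijinyinli} as the key compactness input. The Hausdorff semi-distance is $d_w(\mathcal{A}_{Nr},\mathcal{A}_r)=\sup_{a\in\mathcal{A}_{Nr}}\inf_{b\in\mathcal{A}_r} d_w(a,b)$. Here $d_w$ is a metric inducing the weak topology on the bounded set $\mathcal{B}_r$, and it contains every $\mathcal{A}_{Nr}$ as well as $\mathcal{A}_r$; for instance,
\[
d_w((u,\theta),(v,\xi))=\sum_{j}2^{-j}\bigl(|\langle u-v,\omega_j\rangle|+|\langle \theta-\xi,\varphi_j\rangle|\bigr).
\]

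Suppose the conclusion fails. Then there are $\varepsilon>0$, a subsequence $N_k\to\infty$, and points $a_k=(u_{N_k0},\theta_{N_k0})\in\mathcal{A}_{N_kr}$ with
\[
\operatorname{dist}_{d_w}(a_k,\mathcal{A}_r)\ge\varepsilon \quad\text{for all } k.
\]
The sets satisfy $\mathcal{A}_{N_kr}\subseteq\mathcal{B}_{N_kr}\subseteq\mathcal{B}_r$, and $\mathcal{B}_r$ is bounded in the Hilbert space $\mathcal{H}\times L^2$, hence weakly sequentially compact and metrizable under $d_w$. Passing to a further subsequence, we get $a_k\rightharpoonup(u_0,\theta_0)$ weakly in $\mathcal{H}\times L^2$.

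Next we apply Theorem \ref{bijinyinli} along this subsequence. Its proof only uses a sequence of points $a_k\in\mathcal{A}_{N_kr}$ with $N_k\to\infty$ and extracts further subsequences, so it works verbatim for a subsequence of indices. It gives $(u_0,\theta_0)\in\mathcal{A}_r$. Since $d_w(a_k,(u_0,\theta_0))\to0$, we get $\operatorname{dist}_{d_w}(a_k,\mathcal{A}_r)\to0$, which contradicts the choice of $\varepsilon$.

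The case $r=0$ is trivial: then $\theta\equiv0$, the velocity decays, and both sets reduce to $\{(0,0)\}$. Alternatively, the same argument goes through.

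The main point to check is the bookkeeping in applying Theorem \ref{bijinyinli} to a subsequence $N_k$ rather than to the full sequence. The statement as written indexes by all $N$, but its proof is a diagonal and subsequence extraction, so the restriction is harmless. One could also define $a_N$ for the remaining $N$ to be $(0,0)\in\mathcal{A}_{Nr}$; however, that would spoil the weak convergence unless $(u_0,\theta_0)=0$, so the subsequence remark is the cleaner route.

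A second minor point is that $d_w$ must generate the weak topology on the bounded set $\mathcal{B}_r$, so that weak convergence there is equivalent to $d_w$-convergence. This is standard for bounded sets in separable Hilbert spaces.
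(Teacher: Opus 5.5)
Your proposal is correct and follows essentially the same route as the paper. You argue by contradiction, pick points of $\mathcal{A}_{N_kr}$ that stay a fixed $d_w$-distance from $\mathcal{A}_r$, extract a weakly convergent subsequence using $\mathcal{A}_{N_kr}\subseteq\mathcal{B}_r$, and invoke Theorem~\ref{bijinyinli} to place the limit in $\mathcal{A}_r$. You also make explicit two points the paper uses tacitly: that Theorem~\ref{bijinyinli} applies along a subsequence $N_k\to\infty$, and that $d_w$ must metrize the weak topology on $\mathcal{B}_r$.
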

			\begin{proof}
				Assume the conclusion does not hold. Then there exist $\epsilon_0 > 0$ and a subsequence $\{N_k\}$ such that
				\[
				d_w(\mathcal{A}_{N_k r}, \mathcal{A}_r) \geq \epsilon_0, \quad k=1,2,\cdots.
				\]
				
				By the definition of the semi-distance,
				$
				d_w(\mathcal{A}_{N_k r}, \mathcal{A}_r) = \sup_{x \in \mathcal{A}_{N_k r}} \inf_{y \in \mathcal{A}_r} d_w(x,y)$,
				we know that, for each $k$, there exists $x_{N_k} \in \mathcal{A}_{N_k r}$ satisfying
				
				\begin{equation}\label{maodun}
					\inf_{y \in \mathcal{A}_r} d_w(x_{N_k}, y) \geq \frac{\epsilon_0}{2}.
				\end{equation}
				
				Since $\mathcal{A}_{Nr}\subseteq \mathcal{B}_r$, the sequence $\{x_{N_k}\}$ has a weakly convergent subsequence $\{x_{N_{k_j}}\}$ such that $	x_{N_{k_j}}\rightharpoonup x$. From Theorem \ref{bijinyinli}, we know that $x \in \mathcal{A}_r$, which leads to a contradiction with \eqref{maodun}.
			\end{proof}
			
			\begin{Remark}
				Generally, we can not have $\mathcal{A}_{Nr}\subseteq \mathcal{A}_r$ as $r>0$; $(0,0)\in \mathcal{A}_{Nr}\cap \mathcal{A}_r$ and it may be have some other fixed points $(0,\theta^{v})$, but whether such points really exists, or whether it contains some nontrivial and continuously evolving orbits in their intersection is unknown.
			\end{Remark}
			
			\begin{Remark}
				About systems \eqref{1.1}, it would interesting to know whether or not we can also obtain a similar result corresponding to Remark \ref{more interesting}? that is, whether or not we can obtain the following identity,
				\[
				P_{\theta} \omega^{st}(\widetilde{\mathcal{B}}_{r}^{u_0})=\{\theta \in L^2~ |~\|\theta\|_2=r\}~?
				\]
				where $\widetilde{\mathcal{B}}_{r}^{u_0}:=\{(u_0,\theta):\theta \in  L^2, \|\theta\|_{2}= r\} $ for each fixed $u_0\in \mathcal{H}$ with $\|u_0\|_{\mathcal{H}}\le R(r)$.
			\end{Remark}
			
			\section{Dimension of $P_u\mathcal{A}_r$}\label{s5}
			
			We first recall the definition of Hausdorff and fractal dimensions, see \cite{bab1992,Temam1997}.
			
			\begin{Definition}
				If $\overline{K}$ is compact in $\mathcal{H}$, the fractal dimension of $K$, $d_f(K)$, is defined by
				\[
				d_F(K) = \limsup_{\epsilon \to 0} \frac{\ln N_\epsilon(K,\mathcal{H})}{\ln(1/\epsilon)},
				\]
				where $N_\epsilon(K,\mathcal{H})$ is the least number of closed balls of a fixed radius $\epsilon$ needed to cover $K$.
			\end{Definition}
			
			\begin{Definition}
				The Hausdorff dimension of a compact set $K$, $d_H(K)$, is defined by
				\[
				d_H(K) = \inf_{d>0} \bigl\{d: \mathcal{H}^d(K) = 0\bigr\},
				\]
				where $ \mathcal{H}^d(K)$ is the $d$-dimensional Hausdorff measure.
			\end{Definition}

			We start by reviewing the following Horizontal solutions of (\ref{1.1}) introduced in \cite{Biswas2017}, its steady state plays an important role in our discussion. 
			
			\textbf{Horizontal solutions:} Let $L^2_{0,H}$ denote the zero-mean $L^2$-functions on $\Omega$ that depend only on the horizontal variable $x_1$. Let $a^H,\theta^H \in L^2_{0,H}$. Set $u_1^H = 0$, $p^H = 0$, and let $u_2^H = u_2^H(x_1,t)$ be the unique solution of the following forced diffusion problem:
			\begin{equation}
				\begin{cases}
					\partial_t u_2^H - \nu \partial_{x_1}^2 u_2^H = \theta^H,\\
					u_2^H(x_1,0) = a^H(x_1),
				\end{cases}
			\end{equation}
			with periodic boundary condition on $[0,L]$ and the mean-free condition. It is easy to check that $(\mathbf{u}^H,\theta^H,p^H) := \bigl((u_1^H,u_2^H),\theta^H,0\bigr)$ is a solution to \eqref{1.1} with initial data
			\begin{equation}
				u_1(\mathbf{x},0) = 0,\quad u_2(\mathbf{x},0) = a^H(x_1) \text{ and } \theta(\mathbf{x},0) = \theta^H.
			\end{equation}
			The horizontal solution $(\mathbf{u}^H,\theta^H) $ defined above converges, as $t\to\infty$, to the steady state
			$u_1 =0,\theta=\theta^{H}$, and $u_2$ satisfies
			\begin{equation}\label{manzudeshizi}
				\nu \frac{d^2}{dx_1^2} u_2(x_1) = - \theta^H(x_1).
			\end{equation}
			
			By the Poincare inequality,
			\[
			\|(0,u_2)\|_{\mathcal{H}}
			\leq \frac{\|\theta^{H}\|_{2}}{\nu\lambda_1}.
			\]
			Thus, whenever $\|\theta^H\|_{2}\le r$,
			\[
			\|(0,u_2)\|_{\mathcal{H}}
			\le \frac{r}{\nu\lambda_1}
			=\frac{R(r)}2.
			\]
			Consequently, $((0,u_2),\theta^H)\in \mathcal{B}_r$.  Since it is a fixed point of the semigroup, it belongs to $\omega^{wk}(\mathcal{B}_r)=\mathcal{A}_r$.  Therefore,
			\begin{equation}
				\Big\{
				\Big(\big(0,\frac{(-\partial_{x_1}^2)^{-1}\theta^H}{\nu}\big),~\theta^H\Big)\in \mathcal{H}\times L^2:~\theta^H\in L^2_{0,H},~ \|\theta^H\|_{2}\leq r
				\Big\}
				\subset \mathcal{A}_r,
			\end{equation}
			consequently,
			\begin{equation}\label{tuoqiu}
				\mathcal{E}_r^H
				:=\Big\{
				\big(0,\frac{(-\partial_{x_1}^2)^{-1}\theta^H}{\nu}\big)\in \mathcal{H}:~\theta^H\in L^2_{0,H},~ \|\theta^H\|_{2}\leq r
				\Big\}
				\subset P_u \mathcal{A}_r.
			\end{equation}
			Obviously, 
			\begin{equation}
				\text{$\mathcal{E}_r^H$ is increasing w.r.t. $r$, and $\mathcal{E}_r^H=r\mathcal{E}_1^H$}.
			\end{equation}

			We now choose an orthonormal basis of $\{e_n\}_{n=1}^{\infty}\subset L^2_{0,H}$ satisfying (note that $\kappa_n\sim n^2\kappa_1$ in $L^2_{0,H}$)
			\[
			-\partial_{x_1}^2e_n=\kappa_n e_n,~~n=1,2,\cdots.
			\]
Taking
			\[
			\theta^H=\sum_{n=1}^{\infty}a_ne_n~~\text{with}~~
			\sum_{n=1}^{\infty}|a_n|^2\le r^2,
			\]
			by \eqref{manzudeshizi} and \eqref{tuoqiu} we obtain that
			\begin{equation}\label{5.5}
				\Big(0,\sum_{n=1}^{\infty}
				\frac{a_ne_n}{\nu \kappa_n}\Big)\in\mathcal{E}_r^H.
			\end{equation}
			
			Hence, $\mathcal{E}_r^H$ is a infinite-dimensional Hilbert ellipsoid in $\mathcal{H}$, with semi-axes
			\begin{equation}\label{xiajiang}
				\alpha_n
				=\frac{r}{\nu\kappa_n}, \quad\kappa_n\sim n^2\kappa_1, \qquad n=1,2,\cdots.
			\end{equation}
			
			It is easy to see that $\mathcal{E}_r^H$ is compact in $\mathcal{H}$, convex, centrally symmetric, and has infinite-dimensional linear span. This also shows that viscosity attenuates the $n$th velocity mode by a factor of order $n^{-2}$, but it does not eliminate any of these directions. And, it is obviously that $\mathcal{E}_r^H$ has infinite Hausdorff and fractal dimensions in $\mathcal{H}$, which implies immediately the following results about the dimension estimates of $P_u\mathcal{A}_r$.
			
			\begin{Theorem}\label{thm5.1}
				For every fixed $r \ge 0$, the set $P_u \mathcal{A}_r$ is compact in the strong topology of $\mathcal{H}$. Moreover,
				
				if $r = 0$, then $P_u \mathcal{A}_0 = \{0\}$ and
				\[
				d_H (P_u \mathcal{A}_0) = d_F( P_u \mathcal{A}_0) = 0;
				\]
				
				if $r > 0$, then
				\[
				d_H (P_u \mathcal{A}_r) =  \infty,\quad d_F (P_u \mathcal{A}_r)  =  \infty.
				\]
			\end{Theorem}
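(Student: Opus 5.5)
The proof has three parts: compactness of $P_u\mathcal{A}_r$ in $\mathcal{H}$, the trivial case $r=0$, and infinite dimension for $r>0$ via the ellipsoid \eqref{tuoqiu}.

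\textbf{Compactness.} By Lemma 5.2 of \cite{Biswas2017}, recalled in the Remark on $\mathcal{A}_{Nr}$ and valid equally for \eqref{1.1}, every point of $\mathcal{A}_r$ lies on a complete trajectory contained in $\mathcal{B}_r$. Hence every $u\in P_u\mathcal{A}_r$ satisfies
\[
\|u\|_{\mathbb{H}^1}\le C\lambda_1^{1/2}R(r).
\]
So $P_u\mathcal{A}_r$ is bounded in $V$, and by Rellich it is relatively compact in $\mathcal{H}$. For closedness, take $u_n\in P_u\mathcal{A}_r$ with $u_n\to u$ in $\mathcal{H}$, and pick $\theta_n$ with $(u_n,\theta_n)\in\mathcal{A}_r$. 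Since $\|\theta_n\|_2\le r$, we may pass to a subsequence with $\theta_n\rightharpoonup\theta$ in $L^2$. Then $(u_n,\theta_n)\rightharpoonup(u,\theta)$ weakly in $\mathcal{H}\times L^2$. The set $\mathcal{A}_r$ is weakly closed, being an intersection of weak closures in \eqref{ar}, so $(u,\theta)\in\mathcal{A}_r$ and $u\in P_u\mathcal{A}_r$.

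\textbf{The case $r=0$.} Here $\mathcal{B}_0=\{(0,0)\}$, which is a fixed point, so $\mathcal{A}_0=\{(0,0)\}$ and $P_u\mathcal{A}_0=\{0\}$. Both dimensions of a singleton are $0$.

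\textbf{The case $r>0$.} By \eqref{tuoqiu}, $\mathcal{E}_r^H\subset P_u\mathcal{A}_r$. Since $d_H$ and $d_F$ are monotone under inclusion, and $d_H\le d_F$, it suffices to show $d_H(\mathcal{E}_r^H)=\infty$. Fix $m$ and consider the $m$-dimensional slice
\[
\Big\{\Big(0,\sum_{n\le m}\frac{a_ne_n}{\nu\kappa_n}\Big):\ \sum_{n\le m}|a_n|^2\le r^2\Big\}\subset\mathcal{E}_r^H .
\]
This slice is the image of the closed $m$-ball of radius $r$ in $\mathbb{R}^m$ under a linear injective map into $\mathcal{H}$. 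It therefore contains a genuine $m$-dimensional Euclidean ball of radius $\alpha_m=r/(\nu\kappa_m)>0$ inside an $m$-dimensional subspace, using that $\{e_n\}$ is orthonormal. Such a ball has positive $m$-dimensional Hausdorff measure, so $d_H(\mathcal{E}_r^H)\ge m$. Letting $m\to\infty$ gives $d_H(P_u\mathcal{A}_r)\ge d_H(\mathcal{E}_r^H)=\infty$, and then $d_F(P_u\mathcal{A}_r)=\infty$ as well.

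\textbf{Main obstacle.} The delicate step is compactness, specifically the uniform $V$-bound on $P_u\mathcal{A}_r$ for the original system rather than for $\mathcal{A}_{Nr}$. I would take this directly from \cite{Biswas2017}, or alternatively from the strong $V$-attraction in \cite{sun2019}. The remaining steps are routine.
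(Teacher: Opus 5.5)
Your proposal is correct and follows essentially the paper's route: the horizontal steady states give $\mathcal{E}_r^H\subset P_u\mathcal{A}_r$, infinite dimensionality of this ellipsoid passes to $P_u\mathcal{A}_r$ by monotonicity, and compactness comes from the uniform $V$-bound of \cite{Biswas2017} on $\mathcal{A}_r$. You also supply details the paper only asserts as obvious: the $m$-dimensional ball of radius $r/(\nu\kappa_m)$ inside $\mathcal{E}_r^H$ that gives $d_H\ge m$, and the weak-closedness argument showing $P_u\mathcal{A}_r$ is closed.
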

Theorem \ref{thm5.1} seems has a special interesting: for the normal global attarctors (e.g., the attractors in \cite{bab1992, hale1988,Temam1997}), especially for the global attractor of 2D Navier-Stokes equations on bounded domain, its fractal dimension usually is finite; however, for system \eqref{1.1}, despite as reported in \cite{sun2019}, for each fixed $r>0$, $\mathcal{A}_r$ can attract the vilocity component $u$  even in the strong topology of $V$, the projecton $P_u\mathcal{A}_r$ is still infinite dimensional.\\

To conclude this paper, we would like to mention the following observations/disscusions.
			\begin{itemize}
				\item[-] 	From the definition of $H_r$ (see Lemma \ref{HS}), for each $r>0$, we can deduce that
				\[
				\partial\mathcal{E}_r^H:=\Big\{
				\big(0,\frac{(-\partial_{x_1}^2)^{-1}\theta^H}{\nu}\big):~\theta^H\in L^2_{0,H},~ \|\theta^H\|_{2}= r
				\Big\}\subset P_u H_r,
				\]
				where $\partial\mathcal{E}_r^H$ is the ellipsoidal surface of $\mathcal{E}_r^H$, and it also has infinite Hausdorff and fractal dimensions, and therefore implies that 
				\[
				d_H (P_u H_r) =  \infty,\quad   d_F (P_u H_r) = \infty~~\text{for each}~r>0.
				\]
				
				\item[-] A rough observation to understand the infinite dimension property above may be can through the tangent cone: if we set
				\[
				\mathcal{H}_{ho}:=\{(0,\varphi(x_1))\in\mathcal{H}:\varphi \ \text{depends only on the horizontal variable}\ x_1\},
				\]
				then the closure of the tangent cone of $P_u \mathcal{A}_r$ at the origin contains
				$\mathcal{H}_{ho}$.  Indeed, for every smooth horizontal shear
				$w=(0,\varphi(x_1))$, sufficiently small multiples $tw$ belong to
				$\mathcal{E}_r^H$.  Thus the infinite-dimensionality is present locally at the
				origin and is not merely caused by widely separated pieces of the attractor. The projection also has large fibers: for every zero-mean function
				$\theta^V=\theta^V(x_2)$ with $\|\theta^V\|_{2}\le r$, the pair
				$(0,\theta^V)$ is a vertical hydrostatic steady state, hence infinitely many distinct temperatures project to the same velocity $u=0$.
				
				\item[-] Although Theorem \ref{thm5.1} shows that the standard fractal dimension is infinite, the decay
				$\alpha_n\sim n^{-2}$ in \eqref{xiajiang} gives some substantial finite-resolution information: for the ellipsoid $\mathcal{E}_r^H$, its semi-axes are $\alpha_n=C_r/\kappa_n$, thus there exist absolute constants $c,C>0$ such that, for sufficiently small
				$\epsilon>0$,
				\begin{equation}
					c\big(\frac{C_r}{\epsilon}\big)^{1/2}
					\le
					\ln N_\epsilon(\mathcal{E}_r^H;\mathcal{H})
					\le
					C\big(\frac{C_r}{\epsilon}\big)^{1/2},
				\end{equation}
				where $C_r:=\frac{r}{\nu}$.
				This yields directly a lower bound of $\ln N_\epsilon(P_u \mathcal{A}_r;\mathcal{H})$, together with the Weyl's law for the Stokes operator on the two-dimensional torus, $\lambda_n\sim n$, gives a rough upper bound, that is
				\begin{equation}
					c'_r\epsilon^{-1/2}
					\lesssim
					\ln N_\epsilon(P_u \mathcal{A}_r;\mathcal{H})
					\lesssim
					C_r'\epsilon^{-2};
				\end{equation}
				and note that this lower bound alone also implies that
				\[
				\frac{\ln N_\epsilon(P_u \mathcal{A}_r;\mathcal{H})}{\ln(1/\epsilon)}\to\infty~~\text{as}~~\epsilon \to 0.
				\]
				
				\item[-] In \cite{wu2018, li2020, tao2020} and their related papers, the authors have investigated the long time behavior of the two-dimensional Boussinesq equations without thermal/buoyancy diffusion, and gave some interesting and comprehensive results about its dynamics, especially the dynamics near the hydrostatic equilibrium. Therefore, a natural question is whether it is possible to further decompose the weak $\sigma$-attractor based on their results, to gain some new classification or further understand its structure? 
				
			\end{itemize}
			
			\vspace{0.3 cm}

\noindent{\bf Acknowledgements} 
This work was supported by the National Natural Science Foundation of China (Grant No. 12271227).
			
			\section*{Declarations}
			
			\noindent{\bf Data Availability} Since this work is of abstract theoretical nature, no data sets are generated or analyzed.\\
			


\begin{thebibliography}{11}
				
				\bibitem{bab1992}
				A.~V.~Babin and M.~I.~Vishik.
				\emph{Attractors of evolution equations}.
				Translated and revised from the 1989 Russian original by Babin. Studies in Mathematics and its Applications, 25. North-Holland Publishing Co., Amsterdam, 1992. x+532 pp. 
				
				
				\bibitem{Carvalho2024}
				J.~Bana\'skiewicz, A.~N.~Carvalho, J.~Garcia-Fuentes, and P.~Kalita.
				Autonomous and non-autonomous unbounded attractors in evolutionary problems.
				\emph{J. Dynam. Differential Equations} \textbf{36} (2024), 3481--3534.
				
				
				
				\bibitem{Biswas2017}
				A.~Biswas, C.~Foias, and A.~Larios.
				On the attractor for the semi-dissipative Boussinesq equations.
				\emph{Ann. Inst. H. Poincaré C Anal. Non Linéaire} \textbf{34} (2017), no.~2, 381--405.
				
				
				
				
				%
				
				
				
				
				
				
				
				
				
				
				
				
				\bibitem{carvalho2013}
				A.~N.~Carvalho, J.~A.~Langa, and J.~C.~Robinson.
				\emph{Attractors for infinite-dimensional non-autonomous dynamical systems}.
				Applied Mathematical Sciences, 182. Springer, New York, 2013. xxxvi+409 pp. 
				
				
				
				
				\bibitem{dong2006}
				D.~Chae.
				Global regularity for the 2D Boussinesq equations with partial viscosity terms.
				\emph{Adv. Math.} \textbf{203} (2006), no.~2, 497--513.
				
				\bibitem{chepyzhov1992}
				V.~V.~Chepyzhov,  A.~Yu.~Goritskiĭ.
				Unbounded attractors of evolution equations. Properties of global attractors of partial differential equations.
				\emph{Adv. Soviet Math.} \textbf{10} (1992), Amer. Math. Soc., Providence, RI, 85–128.
				
				
				
				
				
				\bibitem{Rapha2008}
				R.~Danchin and M.~Paicu.
				The Leray and Fujita-Kato theorems for the Boussinesq system with partial viscosity.	\emph{Bull. Soc. Math. France} \textbf{136} (2008), no.~2, 261--309.
				
				
				
				
				
				
				
				
				\bibitem{DiPernaLions1989}
				R.~J. DiPerna and P.-L. Lions.
				Ordinary differential equations, transport theory and Sobolev spaces.
				\emph{Invent. Math.} \textbf{98} (1989), 511--547.
				
				\bibitem{wu2018}
				C.~R.~Doering, J.~ Wu, K.~Zhao, and X.~ Zheng. Long time behavior of the two-dimensional Boussinesq equations without buoyancy diffusion. Phys. D 376/377 (2018), 144–159.
			
				%
				%
			
				\bibitem{hale1988}
				J.~K.~Hale.
				\emph{Asymptotic behavior of dissipative systems}.
				Mathematical Surveys and Monographs, 25. American Mathematical Society, Providence, RI, 1988. x+198 pp. 
			
				
				%
				%
				%
				
				
				
				\bibitem{sun2019}
				J.~He and C.~Sun.
				The weak sigma-attractor for the semi-dissipative 2D Boussinesq system. 
				\emph{Proc. Amer. Math. Soc.} \textbf{148} (2020), no.~3, 1219--1231.
				
				
				
				
				%
				
		
				
				\bibitem{ladyzhenskaya1991}
				O.~Ladyzhenskaya.
				\emph{Attractors for semigroups and evolution equations}.
				Lezioni Lincee. [Lincei Lectures] Cambridge University Press, Cambridge, 1991. xii+73 pp. 
		
				\bibitem{adam2013}
				A.~Larios, E.~Lunasin, and E.~S.~Titi.
				Global well-posedness for the 2D Boussinesq system with anisotropic viscosity and without heat diffusion.
				\emph{J. Differential Equations} \textbf{255} (2013), no.~9, 2636--2654.
				
				
				\bibitem{li2020}
				B.~Li, F.~Wang, and K.~Zhao.
				Large time dynamics of 2D semi-dissipative Boussinesq equations.\emph{ Nonlinearity} \textbf{33} (2020), no.~5, 2481–2501. 
				
			
				
				\bibitem{29}
				A.~Majda and A.~Bertozzi.
				\emph{Vorticity and incompressible flow}.
				Cambridge Texts in Applied Mathematics, 27. Cambridge University Press, Cambridge, 2002. xii+545 pp.
			
				
				\bibitem{oregan2006}
				D.~O'Regan, Y.~J.~Cho, and Y.~Q.~Chen.
				\emph{
					Topological degree theory and applications}.
				Series in Mathematical Analysis and Applications, 10. Chapman \& Hall/CRC, Boca Raton, FL, 2006. iv+221 pp.
				
				\bibitem{31}
				J.~Pedlosky.
				\emph{Geophysical Fluid Dynamics}.
				Springer, New York, 1987. XIV, 710 p.
		
				\bibitem{xu2025}
				A.~Stefanov, J.~Wu, X.~Xu, and Z.~Ye.
				Global regularity results of the 2D fractional Boussinesq equations.
				\emph{Math. Ann.} \textbf{391} (2025), no. 4, 5965–6012. 
				
				\bibitem{tao2020}
				L.~Tao, J.~ Wu, K.~ Zhao, and X.~ Zheng.
				Stability near hydrostatic equilibrium to the 2D Boussinesq equations without thermal diffusion.\emph{ Arch. Ration. Mech. Anal.}\textbf{ 237} (2020), no. 2, 585–630.
				
				
				\bibitem{Temam1997}
				R.~Temam.
				\emph{Infinite-dimensional dynamical systems in mechanics and physics}.
				Applied Mathematical Sciences, 68. Springer-Verlag, New York, 1997. xxii+648 pp.
				
			\end{thebibliography}
	\end{document}